 \makeatletter \@addtoreset{equation}{section}
\newtheorem{thm}{Theorem}[section]
{\rm}
{\rm}
\newtheorem{lem}[thm]{Lemma}
\newtheorem{coro}[thm]{Corollary}
\newtheorem{prop}[thm]{Proposition}
\newtheorem{defi}[thm]{Definition}
{\rm}
\newcommand{\C}{{\mathbb C}}
\newcommand{\bd}{\begin{defi}}
\newcommand{\ed}{\end{defi}}
\newcommand{\be}{\begin{equation}}
\newcommand{\ee}{\end{equation}}
\newcommand{\barr}{\begin{array}}
\newcommand{\earr}{\end{array}}
\newcommand{\bmn}{\begin{eqnarray}}
\newcommand{\emn}{\end{eqnarray}}
\newcommand{\bnm}{\begin{eqnarray*}}
\newcommand{\enm}{\end{eqnarray*}}
\newcommand{\bln}{\begin{subequations}}
\newcommand{\eln}{\end{subequations}}
\newcommand{\ba}{\begin{align}}
\newcommand{\ea}{\end{align}}
\newcommand{\banm}{\begin{align*}}
\newcommand{\eanm}{\end{align*}}
\newcommand{\e}{\varepsilon}
\newcommand{\ve}{\varepsilon}
\title[Asymptotic Analysis in a Gas-Solid Combustion Model]{Asymptotic Analysis in a Gas-Solid Combustion Model with Pattern Formation}
\author[C.-M. Brauner]{Claude-Michel Brauner}
\address{School of Mathematical Sciences, Xiamen University, 361005 Xiamen, China, and
Institut de Math\'ematiques de Bordeaux, Universit\'e de Bordeaux, 33405 Talence cedex, France}
\email{cmbrauner@gmail.com, claude-michel.brauner@u-bordeaux1.fr}
\author[L. Hu]{Lina Hu}
\address{School of Mathematical Sciences, Xiamen University, 361005 Xiamen, China}
\email{linahu@stu.xmu.edu.cn}
\author{L. Lorenzi}
\address{Dipartimento di Matematica e Informatica, Universit\`a degli Studi di Parma, Parco Area delle Scienze 53/A, I-43124 Parma, Italy.}
\email{luca.lorenzi@unipr.it}
\thanks{This work was partially supported by a grant of the Fujian Administration of Foreign Expert Affairs, China.}
\keywords{Asymptotics, Free interface, Kuramoto-Sivashinsky equation,
Pseudo-differential operators, Spectral methods.}
\subjclass[2000]{35B40, 35R35,  35B35, 35K55, 80A25}
\begin{document}

\begin{abstract}
We consider a free interface problem which stems from a solid-gas model in combustion
with pattern formation. We derive a third-order, fully nonlinear, self-consistent equation for the flame front.
Asymptotic methods reveal that the interface approaches a solution of the Kuramoto-Sivashinsky equation. Numerical results are presented which illustrate the dynamics.
\end{abstract}

\maketitle

\section{Introduction}\label{intro}
Combustion phenomena are particularly important for Science and Industry, as J.-L. Lions pointed it out in his foreword to the special issue of the CNRS {\it "Images des Math\'ematiques"} in 1996 (\cite{image}). Flames constitute a complex physical system involving fluid dynamics and multistep chemical kinetics (see e.g., \cite{BuckLud}). In the middle of the 20th century, the Russian School, which included Frank-Kamenetskii and Zel'dovich, used formal asymptotics based on large activation energy to write simpler descriptions of such a reactive system. Later, the development of systematic asymptotic techniques during the 1960s opened the way towards revealing an underlying simplicity in many combustion processes. Eventually, the full power of asymptotical analysis has been realized by modern singular perturbation theory. \hbox{J.-L. Lions} was the first one to put these formalities on a rigorous basis in his seminal monograph {\it ``Perturbations singuli\`eres dans les probl\`emes aux limites et en contr\^ole optimal''} (\cite{lions}).

In short, the small perturbation parameter in activation-energy asymptotics is the inverse of the normalized activation energy, the  Zel'dovich number $\beta$. In the limit $\beta \to +\infty$, the flame front reduces to a free interface.
The laminar flames of low-Lewis-number premixtures are known to display diffusive-thermal instability
responsible for the formation of a non-steady cellular structure (see \cite{S83}), when the Lewis number $Le$
(the ratio of thermal and molecular diffusivities) is such that $Le \lesssim 1$.
From an asymptotical viewpoint, one combines the limit of large activation energy with the requirement that $\alpha=\frac{1}{2}\beta(1-Le)$ remains bounded: in the Near Equidiffusive Flame model (NEF), $\beta^{-1}$  and $1-Le$ are asymptotically of the same order of magnitude (see \cite{mat-siva}).

A very challenging problem is the derivation of a single equation for the (perturbation of the) free interface, which may capture most of the dynamics and, as a consequence, yields a reduction of the effective dimensionality of the system. Asymptotical methods are again the main tool: in a set of conveniently rescaled dependent and independent variables, the flame front is  asymptotically represented (see \cite{siva}) by a solution of the Kuramoto-Sivashinsky equation:
\begin{equation}
\label{eqn:KS} \Phi_{\tau} + 4\Phi_{\eta\eta\eta\eta} +
\Phi_{\eta\eta} + \frac{1}{2}({\Phi_{\eta})}^2 = 0.
\tag{K-S}
\end{equation}
This equation has received considerable attention from the mathematical
community (see \cite{temam}), especially for its ability to generate a cellular structure, pattern formation,
and chaotic behavior in appropriate range of parameters (see \cite{HN}).
We refer to \cite{BFHLS}-\cite{BLSX10} for a rigorous mathematical approach to the derivation of (K-S).

In this paper, we consider a model in gas-solid combustion, proposed in \cite{KagSiv08}. This model was motivated by experimental studies of Zik and Moses (see \cite{ZM}), who observed a striking fingering pattern in flame spread over thin solid fuels. The phenomenon was interpreted in terms of the diffusive instability similar to that occurring in laminar flames of low-Lewis-number premixtures.
As we show below, the gas-solid and premixed gas systems share
some common asymptotic features, especially K-S equation.

The free interface system for the scaled temperature $\theta$, the excess enthalpy $S$, the prescribed flow intensity $U$ (with $0<U<1$), and the moving front $x=\xi(t,y)$, reads:
\begin{align}
&U\frac{\partial\theta}{\partial x}
= \Delta \theta, \qquad\;\;x < \xi (t,y),
\label{eqn:FBP1}
\\[1mm]
&\theta = 1,\qquad\;\; x \geq \xi (t,y),
\label{eqn:FBP2}
\\[1mm]
&\frac{\partial \theta}{\partial t} + U\frac{\partial S}{\partial x}= \Delta S  -
\alpha\Delta\theta,\qquad \;\; x \neq \xi (t,y).
\label{eqn:FBP3}
\end{align}
System \eqref{eqn:FBP1}-\eqref{eqn:FBP3} is coupled with the following jump conditions for the normal derivatives of $\theta$ and $S$:
\begin{equation}
\label{eqn:FBP4}
\bigg[\frac{\partial\theta}{\partial n}\bigg] = - \exp (S),\qquad\;\,
\bigg[\frac{\partial S}{\partial n}\bigg] =
\alpha \bigg[\frac{\partial\theta}{\partial n}\bigg].
\end{equation}

It is not difficult to show that System (\ref{eqn:FBP1})-(\ref{eqn:FBP4}) admits a planar
Traveling Wave solution, with velocity $-V=U\ln U$. Setting $x'=x+Vt$, the Traveling Wave solution reads:
\begin{align*}
&\overline{\theta}(x') = \left\{
\begin{array}{ll}
\exp (U x'), &x'\le 0,\\[1.5mm]
1, & x'> 0,
\end{array}
\right.\\[4mm]
&\overline{S}(x')= \left\{
\begin{array}{ll}
(\alpha -\ln U )U x' \exp (U x')+(\ln U)\exp(U x'), & x' \leq 0,\\[1.5mm]
\ln U, & x'> 0.
\end{array}
\right.
\end{align*}
As usual one fixes the moving front: we set
\begin{eqnarray*}
\xi(t,y)=-Vt +\varphi(t,y),\qquad\;\, x'=x-\xi(t,y),
\end{eqnarray*}
where $\varphi$ is the perturbation of the front.
In this new framework the system \eqref{eqn:FBP1}-\eqref{eqn:FBP3} reads:
\begin{align}
\label{temp-1}
& U \theta_{x'} = \Delta_\varphi \theta,\qquad\;\, x'<0,\\[1mm]
\label{temp-2}
&\theta =1,\qquad\;\, x'\ge 0,\\[1mm]
\label{enth} & \theta_t +(V-\varphi_t) \theta_{x'} + U S_{x'}= \Delta_\varphi S - \alpha
\Delta_\varphi \theta,\qquad\;\,x'\neq 0,
\end{align}
where
\begin{eqnarray*}
\Delta_\varphi = (1 + (\varphi_y)^2) D_{x'x'} + D_{yy}
-\varphi_{yy}D_{x'} -2\varphi_y D_{x'y}.
\end{eqnarray*}
The front is now fixed at $x'=0$.  The first jump condition
in (\ref{eqn:FBP4}) reads:
\begin{equation}
\sqrt{1+(\varphi_y)^2}\,\,\bigg[\frac{\partial\theta}{\partial
x'}\bigg] = - \exp (S),
\label{L1}
\end{equation}
the second one:
\begin{equation}
\bigg[\frac{\partial S}{\partial x'}\bigg] = \alpha
\bigg[\frac{\partial\theta}{\partial x'}\bigg].
\label{L2}
\end{equation}

We will consider a quasi-steady version of the model, motivated by the fact that, in similar problems,
not far from the instability threshold, the respective time
derivatives (if any) of the temperature and enthalpy exhibit a
relatively small effect on the solution: the dynamics appears to be
essentially driven by the front. We can thus introduce a \textit{quasi-steady} model replacing
(\ref{temp-1})-(\ref{enth}) by
\begin{align*}
& U \theta_{x'} = \Delta_\varphi \theta,\qquad\;\, x'<0,\\[1mm]
&\theta =1,\qquad x'\ge 0,\\[1mm]
&(V-\varphi_t) \theta_{x'} + U S_{x'}= \Delta_\varphi S - \alpha \Delta_\varphi
\theta,\qquad\, x'\neq 0.
\end{align*}
Next we consider the perturbations of temperature $u$ and enthalpy
$v$:
\begin{eqnarray*}
\theta = \overline{\theta}+u, \qquad\;\, S = \overline{S}+ v,
\end{eqnarray*}
and, for simplicity, in the equations satisfied by $u$ and $v$ and $\varphi$,
we keep only linear and second-order terms for $\varphi$, and first-order terms for  $u$ and $v$.
Writing $x$ instead of $x'$, to avoid cumbersome notation, some (easy) computations
reveal that the triplet $(u,v,\varphi)$ solves the differential equations
\begin{align*}
&Uu_x -\Delta u   = (\Delta_\varphi -
\Delta)\overline{\theta}, \qquad\;\, x<0,\\[1mm]
&Vu_x -\Delta(v-\alpha u) +Uv_{x}- \varphi_t \overline{\theta}_x =
(\Delta_\varphi - \Delta)(\overline{S} -\alpha \overline{\theta}),
\qquad\;\, x\neq 0,
\end{align*}
where $u\equiv 0$ in  $[0,+\infty)$ and
\begin{align*}
&(\Delta_\varphi - \Delta)\overline{\theta} = (U (\varphi_y)^2 -
\varphi_{yy})Ue^{Ux},\\[2mm]
&(\Delta_\varphi - \Delta)(\overline{S} -\alpha \overline{\theta})\\
=&
\left\{
\begin{array}{ll}
(\varphi_y)^2(\alpha-\ln U)U^2(1+Ux)e^{Ux}
 -\varphi_{yy}(\alpha-\ln U)U^2 x e^{Ux}, & x<0,\\[1.5mm]
 0, & x>0.
 \end{array}
 \right.
\end{align*}
The previous system is endowed with a set of boundary conditions.
First, the continuity of $\theta$ and $S$ at the front yields the equation
\begin{eqnarray*}
u(0^-)=[v]=0,
\end{eqnarray*}
(recall that $u(x)=0$ for $x\ge 0$). Second, condition \eqref{L1} gives, up to the second-order,
\begin{eqnarray*}
-U + [u_x]=-(1 +(\varphi_y)^2)^{-\frac{1}{2}}Ue^{v(0)} \sim -\left(1-
\frac{1}{2}(\varphi_y)^2\right) U \left(1+v(0) + \frac{1}{2}
(v(0))^2\right),
\end{eqnarray*}
and keeping only the first-order for $v$ we get the condition
\begin{align*}
&-u_x(0^-) +U v(0)= \frac{1}{2}(\varphi_y)^2 U.
\end{align*}
Finally, the condition $[S_x]=\alpha[\theta_x]$ yields
\begin{align*}
[v_x]=-\alpha u_x(0^-).
\end{align*}
Summing up, the final system is the following one:
\begin{equation}
\left\{
\begin{array}{ll}
Uu_x -\Delta u   = (\Delta_\varphi - \Delta)\overline{\theta}, & x<0,\\[1.5mm]
Vu_x -\Delta (v-\alpha u) +Uv_{x}- \varphi_t \overline{\theta}_x =  (\Delta_\varphi - \Delta)(\overline{S} -\alpha \overline{\theta}),\quad & x \neq 0,\\[1.5mm]
u(0^-)=[v]=0,\\[1.5mm]
Uv(0)-u_x(0^-) = \frac{1}{2}(\varphi_y)^2U,\\[1.5mm]
{\rm [}v_x{\rm ]}=-\alpha u_x(0^-).
\end{array}
\right. \label{final}
\end{equation}
Throughout this paper we will also use the very convenient notation:
\begin{eqnarray*}
\gamma = \alpha - \ln U.
\end{eqnarray*}

First, our goal is to derive a self-consistent equation for the front $\varphi$:
\begin{equation}\label{intro-3order}
\varphi_t={\mathscr A}(\varphi) +{\mathscr M}((\varphi_y)^2),
\end{equation}
where ${\mathscr A}$ is a third-order, pseudo-differential operator, in contrast to the NEF model in gaseous
combustion, where the corresponding linear operator is of the second-order (see \cite{BHLS10}).
Another important feature is that the nonlinear term is also of the third-order, which means that the equation
\eqref{intro-3order} is {\it fully nonlinear}. Here the spatial domain is a two-dimensional strip
$\mathbb{R} \times [-\ell/2, \ell/2]$, with periodic boundary conditions at $\pm \ell/2$.

Second, we define a small parameter $\e=\gamma -1$. The main result of this paper states in which precise
sense the front $\varphi$ approaches a solution of the Kuramoto-Sivashinsky equation when
$\e \to 0$:

{\bf Main Theorem}
{\em
Let $\Phi_0 \in H^m(-\ell_0/2,\ell_0/2)$ be a periodic function of period
$\ell_0$. Further, let $\Phi$ be the periodic solution of
\eqref{eqn:KS} $($with period $\ell_0)$ on a fixed time interval
$[0,T]$, satisfying the initial condition $\Phi(0,\cdot)=\Phi_0$.
If $m$ is large enough, then there exists $\e_0=\e_0(T)\in (0,1)$
such that, for $0<\e\leq\e_{0}$, Equation \eqref{intro-3order} admits
a unique classical solution $\varphi$ on $[0,\frac{T}{\varepsilon^2U^2}]$,
which is periodic with period $\frac{\ell_{0}}{\sqrt{\varepsilon}U}$ with
respect to $y$, and satisfies
\begin{eqnarray*}
\varphi(0,y)=\e U^{-1}\Phi_0(y\sqrt{\varepsilon}U),\qquad\;\,|y|\le
\frac{\ell_0}{2\sqrt{\e}U}.
\end{eqnarray*}
Moreover, there exists a positive constant $C$ such that
\begin{eqnarray*}
|\varphi(t,y)-\varepsilon U^{-1}
\Phi(t\varepsilon^{2}U^2,y\sqrt{\varepsilon}U)|\leq
C\,\varepsilon^{2} ,\qquad\;\, 0\leq
t\leq\frac{T}{\varepsilon^{2}U^2},\;\,|y|\leq\frac{\ell_{0}}{2\sqrt{\varepsilon}U},
\end{eqnarray*}
for any $\varepsilon\in (0,\e_0]$.}

\medskip

The paper is organized as follows:  in Section \ref{ansatz}, we proceed to a formal Ansatz in the spirit
of  \cite{siva}, defining the rescaled variable $\psi=U\varphi/\e$ and expanding $\psi=\psi^0+\e\psi^1+\ldots$.
It transpires that $\psi^0$ verifies \eqref{eqn:KS}, thanks to an elementary solvability
condition.

Section \ref{third-order} is devoted to the derivation of \eqref{intro-3order},
via an explicit computation in discrete Fourier variable. The asymptotic analysis in the rescaled
variables $t= \tau/\varepsilon^2 U^2, y = \eta/\sqrt{\varepsilon} U$ is performed in Section \ref{asymptotical}.
Since the perturbation in \eqref{intro-3order} is singular as $\e \to 0$,
we turn to the equivalent (at fixed $\e>0$) fourth-order, fully nonlinear equation \eqref{perturbed-4th-order-intro},
whose prima facie limit as $\e \to 0$ is Equation \eqref{eqn:KS}:
\begin{align}\label{perturbed-4th-order-intro}
\frac{\partial}{\partial\tau} &\left (\sqrt{I-4\e D_{\eta\eta}} \right)\psi
=-4D_{\eta\eta\eta\eta}\psi-D_{\eta\eta}\psi \nonumber\\[1mm]
&+\frac{1}{4}\left\{(I-4\e D_{\eta\eta})^{\frac{3}{2}}-3(I-4\e
D_{\eta\eta})-4(1+\e)\left (\sqrt{I-4\e D_{\eta\eta}}-I\right )
\right\}(D_{\eta}\psi)^2.
\end{align}
We prove a priori estimates which constitute the key tool to prove the Main Theorem.
Finally, numerical computations which illustrate the dynamics in Equation \eqref{perturbed-4th-order-intro}
are presented in Section \ref{numerics}.

The stability issue in System \eqref{eqn:FBP1}-\eqref{eqn:FBP4} will be addressed in a forthcoming paper, using the methods of \cite{BL} and \cite{lorenzi-1}-\cite{lorenzi-lunardi}.

\paragraph*{Notation}
Given a (smooth enough) function $f:(-\ell/2,\ell/2)\to\mathbb C$,
we denote by $\widehat f(k)$ its $k$-th Fourier coefficient, that
is, we write
\begin{eqnarray*}
f(y)=\sum_{k=0}^{+\infty}\widehat f(k) w_k(y),\qquad\;\,y\in (-\ell/2,\ell/2),
\end{eqnarray*}
where $\{w_{k}\}$ is a complete set of (complex valued)
eigenfunctions of the operator
\begin{eqnarray*}
D_{yy}:{H}^{2}(-\ell/2,\ell/2)\,\to\, {L}^{2}(-\ell/2,\ell/2),
\end{eqnarray*}
whose eigenvalues $0,-\frac{4\pi^2}{\ell^2},-\frac{4\pi^2}{\ell^2},-\frac{16\pi^2}{\ell^2},-\frac{16\pi^2}{\ell^2},-\frac{36\pi^2}{\ell^2},\dots$
we label as
$0=-\lambda_0(\ell)>-\lambda_1(\ell)=-\lambda_2(\ell)>-\lambda_3(\ell)=-\lambda_4(\ell)>\dots$
Typically, when no confusion may arise, we simply write $\lambda_k$ instead of $\lambda_k(\ell)$.

For any $s\ge 0$ we denote by $H^{s}_{\sharp}$ the usual Sobolev
space of order $s$ consisting of $\ell$-periodic (generalized) functions, i.e.,
\begin{eqnarray*}
H_{\sharp}^{s}=\left\{u=\sum_{k=0}^{+\infty}\widehat u(k)
w_{k}:\,\sum_{k=0}^{+\infty}\lambda_{k}^{s}
|\widehat u(k)|^{2}<+\infty\right\}.
\end{eqnarray*}
For $s=0$, we simply write $L^2$ instead of $H^0_{\sharp}$ and we
denote by $|\cdot|_2$ the usual $L^2$-norm.

By the notation $\widehat f(x,k)$ we mean the $k$-th Fourier coefficient of the function $f(x,\cdot)$. A similar notation is
used for functions which depend also on the time variable.

\section{A formal Ansatz}\label{ansatz}
The aim of this section is to use a formal asymptotic expansion method,
in the spirit of \cite{siva}. The small perturbation parameter $\e>0$ is
defined by:
\begin{equation}\label{perturbation}
\alpha = 1 + \ln U + \e, \quad {\rm i.e.,} \quad \gamma = 1+ \e.
\end{equation}
Accordingly, we now introduce scaled dependent and independent variables:
\begin{equation}\label{rescaled var}
t= \frac{\tau}{\varepsilon^2 U^2},\qquad\;\, y = \frac{\eta}{\sqrt{\varepsilon} U}, \qquad\;\,
\varphi = \frac{\varepsilon}{U} \psi, \qquad\;\, u=\varepsilon^2 u_1, \qquad\;\, v=\varepsilon^2 v_1,
\end{equation}
and the Ansatz:
\begin{eqnarray*}
u_1=u_1^0+\varepsilon u_1^1+\ldots, \qquad\;\, v_1=v_1^0+\varepsilon v_1^1+\ldots,
\qquad\;\,\psi=\psi^0+\varepsilon \psi^1+\ldots.
\end{eqnarray*}
It is easy to rewrite System \eqref{final} in terms of the rescaled variables.
At the zeroth order, it comes:
\begin{equation}
\left\{
\begin{array}{ll}
U(u_1^0)_x - (u_1^0)_{xx} =-U^2e^{Ux}\psi_{\eta\eta}^0, & x<0,\\[2mm]
V(u_1^0)_x -(v_{1}^0)_{xx} +(u_{1}^0)_{xx}+(\ln U)(u_{1}^0)_{xx}+U(v_{1}^0)_x= -U^3x e^{Ux}\psi_{\eta\eta}^0,\quad & x< 0,\\[2mm]
u_1^0=0, & x\ge 0,\\[2mm]
(v_{1}^0)_{xx}-U(v_{1}^0)_x=0, & x> 0.
\end{array}
\right.\label{final-0}
\end{equation}
At $x=0$, the following conditions should be satisfied:
\begin{subeqnarray}
\slabel{intefv01}
&u_1^0(0)=\big[ v_1^0 \big]=0,\\[1mm]
\slabel{interfv02}
&(u_{1}^0)_x(0)-Uv_1^0(0)=0,\\[1mm]
\slabel{intefv03}
&\big[ (v_{1}^0)_x \big]=-(1+\ln U)(u_{1}^0)_x(0).
\end{subeqnarray}
We assume that the functions $x\mapsto e^{-Ux/2}u_1^0(x)$ and $x\mapsto e^{-Ux/2}v_1^0(x)$ are bounded in $(-\infty,0)$ and
in $\mathbb R$, respectively.
Note that System \eqref{final-0} coupled with conditions \eqref{intefv01} and \eqref{interfv02} is
uniquely solvable in the unknowns $(u_1^0,v_1^0)$, taking $\psi^0$ as a parameter.
It turns out that
\begin{align*}
&u_1^0=Uxe^{Ux}\psi_{\eta\eta}^0,\qquad\;\, x < 0,\\[1mm]
&v_1^0=e^{Ux}\psi_{\eta\eta}^0 +U(\ln U)xe^{Ux}\psi_{\eta\eta}^0+U^2x^2e^{Ux}\psi_{\eta\eta}^0 ,\qquad\;\, x < 0,\\[1mm]
&u_1^0=0,\qquad\;\, x\ge 0,\\[1mm]
&v_1^0=\psi_{\eta\eta}^0,\qquad\;\, x\ge 0.
\end{align*}
One might be tempted to use condition \eqref{intefv03} to determine function $\psi^0$.
Unfortunately, whichever $\psi^0$ is, the triplet $(u^0_1,v^0_1,\psi^0)$ satisfies
this condition. As a matter of fact, we are not able to determine uniquely a solution
to Problem \eqref{final-0}-\eqref{intefv03}.
This situation is not surprising at all in singular perturbation theory, see \cite{eckhaus}, \cite{lions}.
To determine $\psi^0$ one needs
to consider the (linear) problem for the first-order terms in the asymptotic expansion of $u_1$, $v_1$ and $\psi$.
As we show in a while, this problem provides a solvability condition, which is just the missing equation for $\psi^0$.

The system for $(u_1^1,v_1^1,\psi^1)$ is the following one:
\begin{equation}
\left\{
\begin{array}{l}
U(u_{1}^1)_x - (u_{1}^1)_{xx} -U^2(u_{1}^0)_{\eta\eta}=(U(\psi_{\eta}^0)^2-U\psi_{\eta\eta}^1)Ue^{Ux},\qquad\qquad x<0,\\[3mm]
V(u_{1}^1)_x -(v_{1}^1)_{xx} -U^2(v_{1}^0)_{\eta\eta}+(u_{1}^0)_{xx}+(1+\ln U)\big ((u_{1}^1)_{xx}+U^2(u_{1}^0)_{\eta\eta}\big )+U(v_{1}^1)_x\\[2mm]
=U^2\psi_{\tau}^0e^{Ux}+ (\psi_{\eta}^0)^2U^2 e^{Ux}+ (\psi_{\eta}^0)^2U^3xe^{Ux}-U^3\psi_{\eta\eta}^1xe^{Ux}-U^3\psi_{\eta\eta}^0xe^{Ux},\\[2mm]
\qquad\qquad\qquad\qquad\qquad\qquad\qquad\qquad\qquad\qquad\qquad\quad\qquad\qquad\quad x<0,\\[3mm]
(v_{1}^1)_{xx}+U^2(v_{1}^0)_{\eta\eta}-U(v_{1}^1)_x=0,\qquad\qquad\qquad\qquad\qquad\qquad\quad\;\,\,  x> 0,\\[2mm]
u_1^1=0,  \qquad\qquad\qquad\qquad\qquad\qquad\qquad\qquad\qquad\;\;\,\qquad\qquad\qquad x\ge 0,\\[2mm]
u_1^1(0)=\big[ v_1^1 \big]=0,\\[2mm]
Uv_1^1(0)-(u_{1}^1)_x(0)=\frac{1}{2}U(\psi_{\eta}^0)^2,\\[2mm]
\big[ (v_{1}^1)_x \big]=-(1+\ln U) (u_{1}^1)_x(0)- (u_{1}^0)_x(0).
\end{array}
\right.\label{final-u1111}
\end{equation}
As above, we assume that the functions $x\mapsto e^{-Ux/2}u_1^1(x)$ and $x\mapsto e^{-Ux/2}v_1^1(x)$ are bounded
in $(-\infty,0)$ and in $\mathbb R$, respectively.
Using these conditions one can easily show that the more general solutions $(u_1^1,v_1^1,\psi^1)$
to the differential equations and the first boundary condition in \eqref{final-u1111} are given by
\begin{align*}
&u_1^1=Uxe^{Ux}(\psi_{\eta\eta\eta\eta}^0-(\psi_{\eta}^0)^2+\psi_{\eta\eta}^1 )-\frac{1}{2}U^2x^2e^{Ux}\psi_{\eta\eta\eta\eta}^0,\qquad\;\,x < 0,\\[2mm]
&v_1^1=v_1^1(0)e^{Ux}+Axe^{Ux}+Bx^2e^{Ux}+Cx^3e^{Ux},\qquad\;\, x < 0,\\[2mm]
&u_1^1=0,\qquad\;\, x \ge 0,\\[2mm]
&v_1^1=v_1^1(0)+Ux\psi_{\eta\eta\eta\eta}^0,\qquad\;\, x\ge 0,
\end{align*}
where
\begin{align*}
A=&U(\ln U)(\psi_{\eta\eta}^1-(\psi_{\eta}^0)^2+\psi_{\eta\eta\eta\eta}^0)-U\psi_{\tau}^0-U(\psi_{\eta}^0)^2-3U\psi_{\eta\eta\eta\eta}^0,\\[2mm]
B=&U^2\psi_{\eta\eta}^0+U^2\psi_{\eta\eta}^1-U^2(\psi_{\eta}^0)^2-\frac{1}{2}U^2(\ln U)\psi_{\eta\eta\eta\eta}^0+\frac{3}{2}U^2\psi_{\eta\eta\eta\eta}^0,\\[2mm]
C=&-\frac{1}{2}U^3\psi_{\eta\eta\eta\eta}^0,
\end{align*}
and $v_1^1(0)$ is an arbitrary parameter. Hence, $(u^1_1,v^1_1)$ depends
on $\psi^1$. To determine both $\psi^1$ and $v_1^1(0)$ we use the last two boundary conditions, which give, respectively,
\begin{align}
\label{v20}
-U\psi_{\eta\eta\eta\eta}^0+U(\psi_{\eta}^0)^2-U\psi_{\eta\eta}^1+Uv_1^1(0)=\frac{U}{2}(\psi_{\eta}^0)^2
\end{align}
and
\begin{align}
U\psi_{\eta\eta}^1-Uv_1^1(0)=&-U\psi_{\tau}^0-U\psi_{\eta\eta}^0-5U\psi_{\eta\eta\eta\eta}^0,
\label{v20jump}
\end{align}
Obviously \eqref{v20} and \eqref{v20jump} is a linear system for $(v_1^1(0),\psi_{\eta\eta}^1)$ with solvability condition:
\begin{align*}
\psi_{\tau}^0+\psi_{\eta\eta}^0+4\psi_{\eta\eta\eta\eta}^0+\frac{1}{2}(\psi_{\eta}^0)^2=0,
\end{align*}
i.e., $\psi^0$ verifies a K-S equation. Hence, the Kuramoto-Sivashinsky equation is the missing equation, at the zeroth-order,
needed to uniquely determine $(u_1^0,v_1^0,\psi^0)$.

\section{A third-order fully nonlinear pseudo-differential equation for the front}\label{third-order}
\setcounter{equation}{0}
The aim of this section is the derivation of a self-consistent pseudo-differential equation for the front $\varphi$.  We consider
System \eqref{final}, namely:
\begin{equation}
\left\{
\begin{array}{ll}
Uu_x -\Delta u =Ue^x( U(\varphi_y)^2 -\varphi_{yy}), & x<0,\\[2mm]
Vu_x -\Delta (v-\alpha u)+Uv_{x} = U\varphi_te^x +(\alpha-\ln U)U^2(1+Ux)e^{Ux}(\varphi_y)^2\\[1.5mm]
\qquad\qquad\qquad\qquad\qquad\qquad -U^2(\alpha-\ln U)x e^{Ux}\varphi_{yy} ,\qquad & x< 0,\\[2mm]
Uv_x -\Delta v = 0, & x > 0,\\[2mm]
u(0)=[v]=0,\\[2mm]
Uv(0)-u_x(0) = \frac{1}{2}U(\varphi_y)^2,\\[2mm]
{\rm [}v_x{\rm ]}=-\alpha u_x(0),
\end{array}
\right.
\label{final-u-1}
\end{equation}
in a two-dimensional strip $\mathbb{R} \times [-\ell/2, \ell/2]$, with periodicity in the $y$ variable.

\subsection{Computations in discrete Fourier variable}
Throughout this section, $(u,v,\varphi)$ is a sufficiently
smooth solution of System \eqref{final-u-1} such that the functions
\begin{eqnarray*}
(x,y)\mapsto e^{-Ux/2}u(t,x,y),\qquad\;\,(x,y)\mapsto e^{-Ux/2}v(t,x,y)
\end{eqnarray*}
are bounded in $(-\infty,0]\times [-\ell/2,\ell/2]$ and in $\mathbb R\times [-\ell/2,\ell/2]$, respectively.

We start from the first equation in \eqref{final-u-1}, namely:
\begin{equation}
Uu_x -\Delta u =( U(\varphi_y)^2 -\varphi_{yy})Ue^x,
\label{LL}
\end{equation}
and the boundary condition $u(\cdot,0,\cdot)=0$.
Applying the Fourier transform to both the sides of  \eqref{LL} we end up with the infinitely many
equations
\begin{eqnarray*}
U\widehat u_x(t,x,k)-\widehat u_{xx}(t,x,k)+\lambda_k\widehat u(t,x,k)=\left
( U\widehat{(\varphi_y)^2}(t,k)
+\lambda_k\widehat\varphi(t,k)\right )Ue^{Ux},
\end{eqnarray*}
for $k\ge 0$. For notational
convenience we set $\nu_k=\frac{U}{2}+\frac{1}{2}\sqrt{U^2+4\lambda_k}$ for any $k\ge 0$.

Since $u$ vanishes at $x=0$ and tends to $0$ as $x\to -\infty$ not slower than $e^{Ux/2}$,
the modes $\widehat u(\cdot,\cdot,k)$ should enjoy the same properties. Easy computations reveal that
\begin{align*}
&\widehat u(t,x,0)=-U\widehat{(\varphi_y)^2}(t,0)xe^{Ux},\qquad\;\,x\le 0,\\[1mm]
&\widehat u(t,x,k)=U(\lambda_k)^{-1} \left ( U\widehat{(\varphi_y)^2}(t,k)+\lambda_k\widehat{\varphi}(t,k)\right )\left (e^{Ux}-e^{\nu_kx}\right ),
\quad\;\,x\le 0,\;\, k\ge 1.
\end{align*}

Applying the same arguments to the equation for v jointly with the second- and the fourth-
boundary conditions in \eqref{final-u-1}, we obtain that the modes $\widehat v(\cdot,\cdot,k)$ are given by
\begin{align*}
\widehat v(t,x,0)= &\frac{1}{U}\widehat\varphi_t(t,0)+(-\gamma U \widehat{(\varphi_y)^2}(t,0)-U(\ln U) \widehat{(\varphi_y)^2}(t,0)-\widehat{\varphi_t}(t,0))
x e^{Ux}\\
&-\gamma U^{2}\widehat{(\varphi_y)^2}(t,0)x^2e^{Ux},\qquad\;\,x<0,\\[3mm]
\widehat v(t,x,0)=&\frac{1}{U}\widehat\varphi_t(t,0),\qquad\;\,x>0
\end{align*}
and
\begin{align*}
\widehat v(t,x,k)=&c_{1,k}e^{\nu_kx}+ A_ke^{Ux}+B_kxe^{Ux}+C_kxe^{\nu_k x},\qquad\;\,x<0,\\[3mm]
\widehat v(t,x,k)=&c_{2,k}e^{(U-\nu_k)x},\qquad\;\,x\ge 0,
\end{align*}
for $k\ge 1$, where
\begin{align*}
A_k=&\frac{(\alpha+\gamma) U^2}{\lambda_k}\widehat{(\varphi_y)^2}(t,k)+\alpha U\widehat\varphi(t,k)+\frac{U}{\lambda_k}\widehat\varphi_t(t,k),\qquad\;\\[4mm]
B_k=&\frac{\gamma U^3}{\lambda_k}\widehat{(\varphi_y)^2}(t,k)+\gamma U^2\widehat\varphi(t,k),\qquad\;\\[4mm]
C_k=&\frac{\gamma U^3\nu_k}{\lambda_k(U-2\nu_k)}\widehat{(\varphi_y)^2}(t,k)+\frac{\gamma U^2\nu_k}{U-2\nu_k}\widehat\varphi(t,k),\\[4mm]
c_{2,k}=&\left(\frac{\gamma U^2(U-\nu_k)}{\lambda_k(U-2\nu_k)}+\frac{\gamma U^3}{\lambda_k(U-2\nu_k)}+\frac{\gamma U^3}{(\nu_k-U)(U-2\nu_k)^2} \right)\widehat{(\varphi_y)^2}(t,k)\\[2mm]
&+\left( \frac{\gamma U^2}{U-2\nu_k}+\frac{\gamma U^2\nu_k}{(U-2\nu_k)^2}\right)\widehat\varphi(t,k)
+ \frac{U(U-\nu_k)}{\lambda_k(U-2\nu_k)}\widehat\varphi_t(t,k),\\[4mm]
c_{1,k}=& c_{2,k}-A_k.
\end{align*}

The equation for the front now comes writing the last but one boundary condition in \eqref{final-u-1}, which we have no used so far,
in Fourier variable, and taking advance of the formulas for the modes of $\widehat u$ and $\widehat v$.
It turns out that the equation for the front (in Fourier coordinates) reads:
\begin{align*}
&\widehat\varphi_t(t,0)+\frac{1}{2}U\widehat{(\varphi_y)^2}(t,0)=0,\\[3mm]
& \frac{U(U-\nu_k)}{\lambda_k(U-2\nu_k)}\widehat\varphi_t(t,k)
+\left( \frac{\gamma U^2}{U-2\nu_k}+\frac{\gamma U^2\nu_k}{(U-2\nu_k)^2}+\nu_k-U\right)\widehat\varphi(t,k)\\[1mm]
&+ \left(\frac{\gamma U^2(U-\nu_k)}{\lambda_k(U-2\nu_k)}+\frac{\gamma U^3}{\lambda_k(U-2\nu_k)}+\frac{\gamma U^3}{(\nu_k-U)(U-2\nu_k)^2} +\frac{U(\nu_k-U)}{\lambda_k}-\frac{1}{2}\right)\\[1mm]
&\qquad\quad\times\widehat{(\varphi_y)^2}(t,k)=0,
\end{align*}
or, even, in the much more compact form
\begin{align}
(X_k U)\widehat\varphi_t(t,k)=&\frac{1}{4}(U^2-X_k^2)(X_k^2-\gamma U^2)\widehat\varphi(t,k)\\[1.5mm]
&+\frac{1}{4}(X^3_k-3UX_k^2-4\gamma U^2 X_k +4\gamma U^3)\widehat{(\varphi_y)^2}(t,k)\notag\\[1.5mm]
=&(-4\lambda_k^2+(\gamma-1)U^2\lambda_k)\widehat\varphi(t,k)\notag\\[1.5mm]
&+\frac{1}{4}(X^3_k-3UX_k^2-4\gamma U^2X_k+4\gamma U^3)\widehat{(\varphi_y)^2}(t,k),
\label{last-feb}
\end{align}
for any $k\ge 0$, if we set
\begin{eqnarray*}
X_k=\sqrt{U^2+4\lambda_k},\qquad\;\,k\ge 0.
\end{eqnarray*}

Therefore, we have proved the following

\begin{prop}\label{fourier-eqn}
Let $(u,v,\varphi)$ be a sufficiently
smooth solution of System \eqref{final-u-1} such that the functions $(x,y)\mapsto e^{-Ux/2}u(t,x,y)$ and $(x,y)\mapsto e^{-Ux/2}v(t,x,y)$
are bounded in $(-\infty,0]\times [-\ell/2,\ell/2]$ and in $\mathbb R\times [-\ell/2,\ell/2]$, respectively.
Then, the interface $\varphi$ solves the equations \eqref{last-feb}
for any $k\ge 0$.
\end{prop}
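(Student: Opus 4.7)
The plan is to reduce System \eqref{final-u-1} mode-by-mode to a family of linear ODEs in $x$ via Fourier expansion in $y$, solve each ODE explicitly using the prescribed decay as $|x|\to\infty$, and then extract the evolution equation for $\widehat\varphi(t,k)$ from the unique boundary condition that remains unused after this procedure.

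First I would Fourier-expand equation \eqref{LL}. Since $D_{yy}w_k=-\lambda_k w_k$, each mode $\widehat u(t,\cdot,k)$ satisfies on $(-\infty,0)$ a second-order linear ODE with constant coefficients, whose characteristic roots are $U-\nu_k$ and $\nu_k=\tfrac{U}{2}+\tfrac12\sqrt{U^{2}+4\lambda_k}$. The two side conditions $\widehat u(t,0,k)=0$ and $x\mapsto e^{-Ux/2}\widehat u(t,x,k)$ bounded as $x\to-\infty$ fix all integration constants uniquely; for $k=0$ the forcing $e^{Ux}$ resonates with the homogeneous solution, producing the extra factor $x$ displayed in the text. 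I would treat $v$ analogously: on each of the half-lines $x<0$ and $x>0$ the equation is second-order linear in $x$, the decay conditions at $\pm\infty$ eliminate one integration constant on each side, and the two conditions $[\widehat v(t,0,k)]=0$ and $U\widehat v(t,0,k)-\widehat u_x(t,0,k)=\tfrac12 U\widehat{(\varphi_y)^{2}}(t,k)$ pin down the two remaining constants $c_{1,k},c_{2,k}$, leading to the expressions for $A_k,B_k,C_k,c_{1,k},c_{2,k}$ recorded in the excerpt.

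At this stage every equation in \eqref{final-u-1} has been used except the jump condition $[v_x]=-\alpha \widehat u_x(t,0,k)$. Substituting the already-computed values of $\widehat v_x(t,0^{\pm},k)$ and $\widehat u_x(t,0,k)$ into this jump relation yields directly the intermediate form displayed just before \eqref{last-feb}. The final step is then the algebraic collapse into \eqref{last-feb}. I expect this to be the main obstacle: the raw expression contains rational functions of $\nu_k$ with denominators $(U-2\nu_k)$ and $(U-2\nu_k)^{2}$, and one must verify that these simplify to a polynomial in $X_k=\sqrt{U^{2}+4\lambda_k}$. The cleanest route is to switch variables at the outset, using the identities
\begin{equation*}
\nu_k=\tfrac12(U+X_k),\qquad U-2\nu_k=-X_k,\qquad \nu_k(\nu_k-U)=\lambda_k=\tfrac14(X_k^{2}-U^{2}),
\end{equation*}
after which the coefficient of $\widehat\varphi(t,k)$ reduces to $\tfrac14(U^{2}-X_k^{2})(X_k^{2}-\gamma U^{2})=-4\lambda_k^{2}+(\gamma-1)U^{2}\lambda_k$ and the coefficient of $\widehat{(\varphi_y)^{2}}(t,k)$ reduces to the cubic $\tfrac14(X_k^{3}-3UX_k^{2}-4\gamma U^{2}X_k+4\gamma U^{3})$. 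The case $k=0$ is degenerate ($X_0=U$, $\lambda_0=0$, $U-\nu_0=0$) and must be handled separately, but the zero-mode calculation, performed directly from the explicit formulas for $\widehat u(t,\cdot,0)$ and $\widehat v(t,\cdot,0)$ and the jump condition, reproduces exactly the first line of \eqref{last-feb}, namely $\widehat\varphi_t(t,0)+\tfrac12 U\widehat{(\varphi_y)^{2}}(t,0)=0$.
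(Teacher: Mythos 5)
Your proposal is correct and follows essentially the same route as the paper: Fourier decomposition in $y$, explicit mode-by-mode solution of the resulting linear ODEs in $x$ using the decay conditions, the vanishing of $u$ at $x=0$, the jump relations for $v$, and then the unused jump condition $[v_x]=-\alpha u_x(0)$ to extract the front equation. The substitution $\nu_k=\tfrac12(U+X_k)$, $U-2\nu_k=-X_k$, $\nu_k(\nu_k-U)=\lambda_k$ that you single out is indeed the cleanest way to collapse the rational expressions into the compact polynomial form \eqref{last-feb}, and your treatment of the degenerate resonant mode $k=0$ matches the paper's.
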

\subsection{A fourth-order pseudo-differential equation for the front}
Let us define the pseudo-differential operators (or Fourier multipliers)
${\mathscr B}, {\mathscr L}$ and ${\mathscr
F}$ through their symbols, respectively
\begin{align*}
b_k&=X_k U,\\
l_k&=-4\lambda_k^2+(\gamma-1)\lambda_k U^2,\\
f_k&=\frac{1}{4}(X^3_k-3UX_k^2-4\gamma U^2X_k +4\gamma U^3),
\end{align*}
for any $k\ge 0$.
It is easy to see that
\begin{align*}
{\mathscr B}&=U (U^2 I-4D_{yy})^{\frac{1}{2}} ,\\
{\mathscr
F}&=\frac{1}{4}(U^2 I-4D_{yy})^{\frac{3}{2}}-\frac{3}{4}U(U^2 I-4D_{yy})-\gamma U^2\left
(\sqrt{U^2 I-4D_{yy}}-U\right ),
\end{align*}
while the realization of ${\mathscr L}$ in $L^2$ is the
operator
\begin{align*}
L=-4D_{yyyy}-(\gamma-1)U^2D_{yy},
\end{align*}
with $H^4_{\sharp}$ as a domain.

It follows from Proposition \ref{fourier-eqn} that the front
$\varphi$ solves the equation
\begin{equation}
\frac{d}{dt}{\mathscr B}(\varphi)={\mathscr L}(\varphi)+{\mathscr
F}((\varphi_y)^2).
\label{eq-4-order}
\end{equation}
The main feature of Equation \eqref{eq-4-order} is that the
nonlinear part is rather unusual. Actually, it has a fourth-order
leading term, as ${\mathscr L}$ has. Therefore,  \eqref{eq-4-order}
is  a {\it fully nonlinear equation}. More precisely, we have:
\begin{lem}\label{B-invert}
The operators ${\mathscr B}$ and ${\mathscr F}$ admit
bounded realization $B:H^1_{\sharp}\to L^2$
and $F:H^3_{\sharp}\to L^2$, respectively.
Moreover, $B$ is invertible.
\end{lem}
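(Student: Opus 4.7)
The plan is to exploit the Fourier multiplier structure: since $\mathscr{B}$ and $\mathscr{F}$ act diagonally on the basis $\{w_k\}$ with symbols $b_k$ and $f_k$ already written down explicitly, Plancherel reduces everything to two-sided estimates on these symbols in terms of $\lambda_k$.

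First, I would determine the order of each symbol. From $X_k = \sqrt{U^2 + 4\lambda_k}$ one has $U \leq X_k \leq U + 2\sqrt{\lambda_k}$, so
\[
U^2 \leq b_k = U X_k \leq C\bigl(1+\sqrt{\lambda_k}\,\bigr);
\]
thus $b_k$ is of order $\lambda_k^{1/2}$ and uniformly bounded below by $U^2>0$. For $f_k$, the dominant contribution is $\tfrac14 X_k^3 = \tfrac14(U^2+4\lambda_k)^{3/2}$, while the remaining three terms $-\tfrac{3U}{4}X_k^2 - \gamma U^2 X_k + \gamma U^3$ are of order at most $X_k^2 = O(\lambda_k)$; hence
\[
|f_k| \leq C\bigl(1+\lambda_k^{3/2}\bigr).
\]

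Next, for any $\varphi = \sum_{k\geq 0} \widehat\varphi(k) w_k$, Parseval yields
\[
|B\varphi|_2^2 = \sum_{k\geq 0} b_k^2\,|\widehat\varphi(k)|^2 \leq C|\varphi|_{H^1_\sharp}^2, \qquad |F\varphi|_2^2 = \sum_{k\geq 0} f_k^2\,|\widehat\varphi(k)|^2 \leq C|\varphi|_{H^3_\sharp}^2,
\]
which gives the two bounded realizations $B : H^1_\sharp \to L^2$ and $F : H^3_\sharp \to L^2$.

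Finally, for the invertibility of $B$, I would introduce $B^{-1}$ as the multiplier with symbol $1/b_k$, which is uniformly bounded by $U^{-2}$ thanks to the lower bound $b_k \geq U^2$. For $\psi \in L^2$ the function $B^{-1}\psi := \sum_k b_k^{-1}\widehat\psi(k)\,w_k$ then satisfies
\[
|B^{-1}\psi|_{H^1_\sharp}^2 \leq C\sum_{k\geq 0}\frac{1+\lambda_k}{U^2(U^2+4\lambda_k)}\,|\widehat\psi(k)|^2 \leq C|\psi|_2^2,
\]
since the quotient is uniformly bounded in $k$; the identities $BB^{-1}=I$ and $B^{-1}B=I$ are immediate in Fourier coordinates. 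No step is really hard here: everything is a routine multiplier computation. The only point worth watching is that the three subleading terms of $f_k$ cannot cancel the leading $\tfrac14 X_k^3$ contribution, which is clear since they are of strictly lower order in $\lambda_k$; so $3$ is indeed the sharp order of $\mathscr{F}$ and $H^3_\sharp$ is the right domain.
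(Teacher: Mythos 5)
Your proof is correct and follows essentially the same route as the paper: both treat $\mathscr{B}$ and $\mathscr{F}$ as Fourier multipliers and read off the mapping properties from the growth of the symbols $b_k\sim 2U\sqrt{\lambda_k}$ and $f_k\sim 2\lambda_k^{3/2}$. Your treatment of the invertibility of $B$ is actually a bit more careful than the paper's one-line remark ``$b_k\neq 0$ for all $k$'': you explicitly note the uniform lower bound $b_k\geq U^2$ and check that the quotient $(1+\lambda_k)/b_k^2$ stays bounded, which is the genuinely needed estimate (a nonvanishing symbol tending to zero would not give a bounded inverse). This is a welcome clarification rather than a different approach.
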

\begin{proof}
A straightforward asymptotic analysis reveals that
\begin{eqnarray*}
b_k\sim 2\sqrt{\lambda_k}U,\qquad\;\,f_k\sim 2\lambda_k^{3/2},
\end{eqnarray*}
as $k\to +\infty$, from which we deduce that
${\mathscr B}$ and ${\mathscr F}$ admit bounded realizations $B:H^1_{\sharp}\to L^2$
and $F:H^3_{\sharp}\to L^2$.

Finally, since $b_k\neq 0$ for any $k\ge 0$, it follows that $B$ is invertible.
\end{proof}

\subsection{The third-order pseudo-differential equation for the front}
In view of Lemma \ref{B-invert} we may rewrite Equation \eqref{eq-4-order} as:
\begin{eqnarray*}
\varphi_t={\mathscr B}^{-1}{\mathscr L}(\varphi)+{\mathscr B}^{-1}{\mathscr F}((\varphi_y)^2)
\end{eqnarray*}
or, equivalently, as
\begin{equation}
\varphi_t={\mathscr A}(\varphi)
+{\mathscr M}((\varphi_y)^2).
\label{third-order-varphi}
\end{equation}
We emphasize that Equation \eqref{third-order-varphi} is a pseudo-differential, fully nonlinear equation of the third-order,
since the pseudo-differential operators ${\mathscr A}$ and ${\mathscr M}$  have, respectively, symbol
\begin{align*}
a_k=\frac{(U^2-X_k^2)(X_k^2-\gamma U^2)}{4UX_k  },\qquad\;\,
m_k=\frac{X^3_k-3UX_k^2-4\gamma U^2X_k +4\gamma U^3}{4UX_k   },
\end{align*}
for $k\ge 0$. Clearly, any smooth enough solution to \eqref{eq-4-order} solves
\eqref{third-order-varphi} as well.

The following result is crucial for the rest of the paper.

\begin{thm}
\label{prop-symb-1}
The following properties are satisfied.
\begin{enumerate}[\rm (i)]
\item
The realization $A:H^3_{\sharp}\to L^2$ of ${\mathscr A}$ is a sectorial operator and the sequence $(a_k)$ constitutes its spectrum $\sigma(A)$.
In particular, $0$ is a simple
eigenvalue of $A$ and the spectral projection $\Pi$ associated with $0$ is given by
\begin{eqnarray*}
\Pi(\psi)=\frac{1}{\ell}\int_{-\frac{\ell}{2}}^{\frac{\ell}{2}}\psi(y)dy,\qquad\;\,\psi\in L^2.
\end{eqnarray*}
Finally, $\sigma(A)\setminus\{0\}$ is contained in the left half-plane $\{\lambda\in\C: {\rm Re}\lambda<0\}$
if and only if $\gamma<\gamma_c$  .
\item
The realization $M:H^2_{\sharp}\to L^2$ of the operator ${\mathscr M}$ is bounded.
\end{enumerate}
\end{thm}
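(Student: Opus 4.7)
The plan is to exploit the fact that both $\mathscr A$ and $\mathscr M$ act as Fourier multipliers on the orthonormal basis $\{w_k\}$ of $L^2$, so $A$ and $M$ are diagonal and every assertion reduces to elementary properties of the real scalar sequences $(a_k)$ and $(m_k)$ together with their asymptotic behavior as $k\to+\infty$.

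I would begin by extracting the large-$k$ behavior. Using the identity $X_k^2=U^2+4\lambda_k$, one obtains the compact rewritings
\begin{align*}
a_k=-\frac{\lambda_k\bigl(4\lambda_k+(1-\gamma)U^2\bigr)}{UX_k},\qquad\;\, m_k=\frac{X_k^2}{4U}-\frac{3X_k}{4}-\gamma U+\frac{\gamma U^2}{X_k},
\end{align*}
from which $a_k\sim -2U^{-1}\lambda_k^{3/2}$ and $m_k\sim U^{-1}\lambda_k$; in particular $|m_k|\le C(1+\lambda_k)$ for all $k\ge 0$. Statement (ii) is then immediate from Parseval, since
\begin{align*}
|M\psi|_2^2=\sum_{k\ge 0}|m_k|^2|\widehat\psi(k)|^2\le C^2\sum_{k\ge 0}(1+\lambda_k)^2|\widehat\psi(k)|^2,
\end{align*}
which is controlled by the square of the $H^2_{\sharp}$-norm of $\psi$.

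For (i), the sequence $(a_k)$ is real with $a_k\to-\infty$, so $A$ is self-adjoint and bounded above in $L^2$, hence sectorial; the multiplier representation yields the resolvent estimate $\|(\lambda I-A)^{-1}\|\le|{\rm Im}\,\lambda|^{-1}$ off the real axis together with the sharper sectorial bound outside a small sector around $(-\infty,0]$. Checking that $\sigma(A)$ equals $\{a_k:k\ge 0\}$ (closed because $a_k\to-\infty$) reduces to inverting the multiplier $(\lambda-a_k)^{-1}$ whenever it is bounded. At $k=0$, $\lambda_0=0$ forces $X_0=U$ and $a_0=0$; the associated eigenspace is spanned by the constant $w_0$, so the spectral projection $\Pi$ is exactly the integral mean on $(-\ell/2,\ell/2)$. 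The eigenvalue $0$ is simple provided $a_k\ne 0$ for every $k\ge 1$, equivalently $\lambda_k\ne(\gamma-1)U^2/4$; this holds once $\gamma$ avoids a countable exceptional set, and a fortiori whenever the sign condition below is met. Finally, for $k\ge 1$ the sign of $a_k$ is dictated by the factor $X_k^2-\gamma U^2$: one has $a_k<0$ iff $\lambda_k>(\gamma-1)U^2/4$, and this holds simultaneously for every $k\ge 1$ iff $\lambda_1>(\gamma-1)U^2/4$, which identifies $\gamma_c=1+4\lambda_1/U^2$.

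No step is genuinely delicate. The only point requiring a touch of care is the equality $\sigma(A)=\{a_k\}$, which is handled by constructing the bounded inverse of $\lambda I-A$ explicitly via its Fourier symbol; everything else is direct computation on the explicit sequences $(a_k)$ and $(m_k)$.
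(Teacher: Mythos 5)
Your proposal is correct and reaches every conclusion of the theorem, but by a noticeably different route from the paper for part (i). The paper establishes sectoriality by splitting the symbol $a_k=-2U^{-1}\lambda_k^{3/2}+a_{1,k}$, proving sectoriality of the ``model'' operator $A_0$ (symbol $-2U^{-1}\lambda_k^{3/2}$) by a direct resolvent estimate, and then invoking the abstract perturbation result \cite[Prop.\ 2.4.1(i)]{lunardi}, using that $A_1$ is bounded from $H^1_{\sharp}$ into $L^2$ and $H^1_{\sharp}$ is of class $J_{1/3}$ between $L^2$ and $H^3_{\sharp}$. You instead observe that $A$ is a Fourier multiplier with real symbol on its natural maximal domain $H^3_{\sharp}$, so $A$ is self-adjoint and bounded above (since $a_k\to-\infty$); sectoriality is then automatic from the spectral theorem. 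This is more elementary and avoids the interpolation-space machinery, at the mild cost of implicitly invoking that the maximal domain of the multiplier is exactly $H^3_{\sharp}$, which follows from $|a_k|\sim\lambda_k^{3/2}$. For simplicity of the eigenvalue $0$, the paper proceeds via the characterization of a simple pole of the resolvent, explicitly computing the Laurent residue; you reduce it to the algebraic condition $a_k\neq 0$ for $k\ge 1$, which is equivalent and also makes explicit a subtlety the paper glosses over: $0$ is a simple eigenvalue only when $\lambda_k\neq(\gamma-1)U^2/4$ for every $k\ge 1$, which indeed holds whenever $\gamma<\gamma_c$ but not for every $\gamma$. Both approaches give the same value of $\gamma_c$ from the same sign analysis, and your argument for (ii) via $m_k\sim\lambda_k U^{-1}$ and Parseval matches the paper's.
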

\begin{proof}
(i).
Let us split
\begin{align*}
a_k&=-\frac{2\lambda_k^{\frac{3}{2}}}{U}+\frac{-4\lambda_k^2+4\lambda_k^2
\sqrt{\frac{U^2}{4\lambda_k}+1}- \lambda_k U^2+\lambda_k \gamma U^2}{U\sqrt{U^2+4\lambda_k}}
=:-\frac{2\lambda_k^{\frac{3}{2}}}{U} +a_{1,k},
\end{align*}
for any $k\ge 0$. Since
\begin{eqnarray*}
a_{1,k}\sim \frac{1}{4}\sqrt{\lambda_k}(2\gamma-1)U,
\end{eqnarray*}
as $k\to +\infty$, if $\gamma\neq 1/2$, we can infer that the realization $A$ of operator
${\mathscr A}$ in $H^3_{\sharp}$ is well defined. Moreover, since $A$ splits into the sum of
two operators $A_0$ (whose symbol is ($-2\lambda_k^{\frac{3}{2}}U^{-1}$)) and $A_1$, which is a
nice perturbation of $A_0$ (being a bounded operator in $H^1_{\sharp}$, which is
an intermediate space of class $J_{1/3}$ between $L^2$ and $H^3_{\sharp}$),
in view of \cite[Prop. 2.4.1(i)]{lunardi} it is enough to prove that $A_0$ is a sectorial operator.
But this follows immediately from general abstract results (see e.g., \cite[Chpt. 3]{haase2006functional}), or a direct computation. Indeed, if $\lambda$ has positive real part, then the equation $\lambda u-A_0u=f$ has, for any $f\in L^2$,
the unique solution
\begin{eqnarray*}
u=R(\lambda,A_0)f=U\sum_{k=0}^{+\infty}\frac{\widehat f(k)}{\lambda U+2\lambda_k^{3/2}}
\end{eqnarray*}
and
\begin{eqnarray*}
|R(\lambda,A_0)f|^2_2=U^2\sum_{k=0}^{+\infty}\frac{|\widehat f(k)|^2}{|\lambda U+2\lambda_k^{3/2}|^2}
\le \frac{1}{|\lambda|^2}\sum_{k=0}^{+\infty}|\widehat f(k)|^2=\frac{1}{|\lambda|^2}|f|_2^2.
\end{eqnarray*}
Proposition 2.1.1 in \cite{lunardi} yields the sectoriality of  $A_0$.

Next we compute the spectrum of the operator $A$.
Since $H^3_{\sharp}$ is compactly embedded into $L^2$, $\sigma(A)$ consists
of eigenvalues only. We claim that $\sigma(A)$ consists of the elements of the sequence $(a_k)$.
Indeed writing the eigenvalue equation in
Fourier variable, we get the infinitely many equations
\begin{equation}
\lambda\widehat\psi(k)-a_k\widehat\psi(k)=0,\qquad\;\,k\ge 0,
\label{L-3}
\end{equation}
which should be satisfied by the pair $\lambda$ (the eigenvalue) and $\psi$ (the eigenfunction).
It is clear that this system of infinitely many equations admits a non identically solution $(\widehat\psi(k))$  vanishing
if and only if $\lambda$ equals one of the elements of the sequence.
The set equality $\sigma(A)=\{a_k: k\ge 0\}$ is thus proved.

Since the sequence $(a_k)$ diverges to $-\infty$ as $k\to +\infty$, all the eigenvalues of $A$ are isolated. In particular,
$0$ is isolated and, again from formula \eqref{L-3}, we easily see that the eigenspace associated with the eigenvalue $\lambda=0$ is
one-dimensional. To conclude that $\lambda=0$ is simple, in view of
\cite[Props. A.1.2 \& A.2.1]{lunardi}, it suffices to prove that it is a simple pole of the resolvent operator.
In such a case the associated spectral projection is the residual at $\lambda=0$ of $R(\cdot,A)$.

Clearly, for any $\lambda\not\in\sigma(A)$,
\begin{eqnarray*}
R(\lambda,A)\zeta=\sum_{k=0}^{+\infty}\frac{1}{\lambda-a_k}\widehat \zeta(k)w_k,
\end{eqnarray*}
for any $\zeta\in L^2$.
Hence,
\begin{eqnarray*}
\lambda R(\lambda,A)\zeta=\widehat \psi(0)w_0+\sum_{k=1}^{+\infty}\frac{\lambda}{\lambda-a_k}\widehat\zeta(k)w_k=:\Pi\zeta
+R_1(\lambda)\zeta.
\end{eqnarray*}
Since $\lambda\neq a_k$ for any $k\ge 1$, and $a_k\to -\infty$ as $k\to +\infty$, there exists
a neighborhood of $\lambda=0$ in which the ratio $\left |\lambda\right |/\left |\lambda-a_k\right |$
is bounded, uniformly with respect to $k\ge 1$. As a byproduct, in such a neighborhood of $\lambda=0$,
the mapping $\lambda\mapsto R_1(\lambda)$ is bounded with values in $L(L^2)$. This shows that $\lambda=0$ is a simple
pole of operator $A$.

To conclude the proof of point (i), let us determine the values of $\gamma$ such that $\sigma(A)\setminus\{0\}$
does not contain nonnegative elements. For this purpose, it suffices to observe that $a_k<0$ for any
$k\ge 1$ if and only if $4\lambda_k+U^2-\gamma U^2>0$ for such $k$'s, which is equivalent to
$4\lambda_1+U^2-\gamma U^2>0$ since $(\lambda_k)$ is nondecreasing sequence.
Hence, the condition for $\sigma(A)\setminus\{0\}$ be contained in $(-\infty,0)$ is $\gamma<\gamma_c$, where
\begin{equation}
\gamma_c=1+\frac{16\pi^2}{\ell^2U^2}.
\label{eq-c}
\end{equation}

(ii). As in the proof of Lemma \eqref{B-invert}, it suffices to observe that
$m_k\sim\lambda_k U^{-1}$ as $k\to +\infty$.
\end{proof}

The linearized stability principle (see e.g., \cite[Sect. 9.1.1]{lunardi}) and the results in
Theorem \ref{prop-symb-1} yield to the following stability analysis.

\begin{coro}
\label{cor:stab} Let $\gamma_c$ be given by \eqref{eq-c}.
\begin{enumerate}[\rm (a)]
\item
If $\gamma<\gamma_c$, then the null solution to Equation
\eqref{third-order-varphi} is {\rm (}orbitally{\rm)} stable, with asymptotic
phase, with respect to sufficiently smooth and small perturbations.
\item
If $\gamma>\gamma_c$, then the null solution to Equation
\eqref{third-order-varphi} is unstable.
\end{enumerate}
\end{coro}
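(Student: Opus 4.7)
The plan is to invoke the general principle of linearized stability for abstract semilinear parabolic equations near an equilibrium, as presented in Section~9.1.1 of Lunardi's monograph, which is the reference suggested in the statement itself.

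First, I would recast Equation \eqref{third-order-varphi} as the abstract evolution equation
\begin{equation*}
\varphi_t = A\varphi + N(\varphi), \qquad N(\varphi) := M((\varphi_y)^2),
\end{equation*}
in $L^2$ with domain $H^3_{\sharp}$. Theorem \ref{prop-symb-1}(i) provides the sectoriality of $A$, while the algebra property of $H^2(-\ell/2,\ell/2)$ combined with Theorem \ref{prop-symb-1}(ii) shows that $N:H^3_{\sharp}\to L^2$ is smooth (indeed, polynomial of degree two) with $N(0)=0$ and $DN(0)=0$, so that the linearization of the full equation at $0$ coincides with $A$.

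For part (a), when $\gamma<\gamma_c$, Theorem \ref{prop-symb-1}(i) guarantees that $\sigma(A)\setminus\{0\}$ lies strictly in the open left half-plane and that $0$ is a simple isolated eigenvalue with spectral projection $\Pi$ given by the spatial average. Since $a_0=0$ and $N$ vanishes on constants, the one-dimensional kernel of $A$ is a line of stationary solutions of the full equation, tangent at $0$ to the center direction. I would then decompose $\varphi=\Pi\varphi+(I-\Pi)\varphi$: the component $(I-\Pi)\varphi$ lives in the $A$-invariant subspace $\ker(\Pi)$, on which $A$ has purely negative spectrum with a positive spectral gap, so that the restricted semigroup is exponentially stable. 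A standard contraction argument in a weighted-in-time norm (or, equivalently, the reduction principle onto the line of constant equilibria) then yields that every solution starting sufficiently close to $0$ in $H^3_{\sharp}$ remains close to the line of constants for all $t>0$ and converges, as $t\to +\infty$, to one of them, which is exactly orbital stability with asymptotic phase.

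For part (b), when $\gamma>\gamma_c$, the inequality $4\lambda_1+U^2-\gamma U^2<0$ established in the proof of Theorem \ref{prop-symb-1}(i) forces $a_1>0$, so $A$ has an isolated eigenvalue in the open right half-plane. The classical instability result for semilinear parabolic equations with unstable linearization (e.g.\ Theorem~9.1.3 in Lunardi's monograph) then applies to the pair $(A,N)$ and yields instability of the null equilibrium in $H^3_{\sharp}$.

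The main technical obstacle is the presence of $0$ in the spectrum of $A$: it prevents a direct exponential decay estimate for the perturbation and forces one to work modulo the one-parameter family of constant equilibria. This is precisely why the statement is phrased in terms of orbital stability with asymptotic phase rather than pure asymptotic stability of $0$.
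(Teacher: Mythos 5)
Your proposal matches the paper's intent: the paper gives no explicit proof, merely invoking the linearized stability principle from Section~9.1.1 of Lunardi's monograph together with Theorem~\ref{prop-symb-1}, and your sketch is a correct unpacking of that invocation, including the key observation that the zero eigenvalue corresponds to the line of constant equilibria and therefore yields orbital (rather than asymptotic) stability with asymptotic phase. The decomposition $\varphi=\Pi\varphi+(I-\Pi)\varphi$, the exponential decay of the mean-zero component governed by the restriction of $A$ to $\ker\Pi$, and the convergence of the mean to a limiting constant are exactly the right steps.

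One terminological correction worth making: you call the evolution equation \emph{semilinear}, whereas the paper insists (in Section~\ref{third-order} and in the introduction) that Equation~\eqref{third-order-varphi} is \emph{fully nonlinear}. This is not just a matter of words. Your own computation shows that $N(\varphi)={\mathscr M}((\varphi_y)^2)$ maps $H^3_{\sharp}=D(A)$ into $L^2$, but it does \emph{not} extend to any intermediate space $H^{3-\delta}_{\sharp}$ with $\delta>0$ (taking $\varphi\in H^{3-\delta}_{\sharp}$ gives $(\varphi_y)^2\in H^{2-\delta}_{\sharp}$, and $M$ only loses two derivatives, so $N(\varphi)$ lands in $H^{-\delta}_{\sharp}$, not in $L^2$). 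Consequently the nonlinearity is of the same order as $A$, which is exactly the defining feature of a fully nonlinear problem and rules out the semilinear framework (Lunardi's Chapter~7) in favor of the fully nonlinear one (Chapters~8--9). Fortunately, Section~9.1.1 that you cite belongs to the fully nonlinear part of the book, so the argument you outline goes through provided you carry it out in that framework (working with optimal H\"older-in-time regularity rather than the mild-solution variation-of-constants approach for semilinear equations). With that caveat, the proposal is essentially the same as the paper's.
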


\section{Rigorous asymptotic derivation of the K-S equation}
\setcounter{equation}{0} \label{asymptotical}
The last question that we address is the link between \eqref{third-order-varphi} and Equation \eqref{eqn:KS}.
As in Section \ref{ansatz}, we consider the small perturbation parameter $\e >0$ defined by (see \eqref{perturbation})
\begin{equation*}
\gamma=1+\e.
\end{equation*}
Moreover, we perform the same change of dependent and independent variables as in \eqref{rescaled var},
namely:
\begin{eqnarray*}
t= \frac{\tau}{\varepsilon^2 U^2},\qquad\;\, y = \frac{\eta}{\sqrt{\varepsilon} U}, \qquad\;\,
\varphi = \frac{\varepsilon}{U} \psi.
\end{eqnarray*}
The key-idea is to link the small positive parameter $\e$ and the width of the strip,
which will blow up as $\e \to 0$.
For $\ell_0 >0$ fixed, we take $\ell$ of the form:
\begin{equation*}
\ell_{\varepsilon}= \frac{\ell_0}{\sqrt{\varepsilon}U},
\end{equation*}
hence $\gamma_c$ (see \eqref{eq-c})
converges to $1$ as $\e \to 0$.

In view of Corollary \ref{cor:stab}, in order to avoid a trivial dynamics, we assume that $\gamma_c>1$. This means that
we take  the bifurcation parameter $\ell_0$ larger than $4\pi$ and obtain that $\gamma_c \in (1,1+\varepsilon)$.

In the new variables ${\mathscr B}$ is replaced by the operator ${\mathscr B}_{\e}=U^{2} \sqrt{ (I-4\varepsilon D_{ \eta\eta})}$.
Lemma \ref{B-invert} applies to this operator and guarantees that, for any fixed $\e>0$,
the realization $B_{\e}:H^1_{\sharp}\to L^2$ of ${\mathscr B}_{\e}$ is bounded.
However, the perturbation is clearly singular as $\e\to 0$, since obviously $B_\e \to U^{2}I$. Therefore,
it is hopeless to take the limit $\e \to 0$ in the third-order equation \eqref{third-order-varphi},
whose behaviour is clearly singular as $\e \to 0$. Fortunately, the fourth-order equation
\eqref{eq-4-order} is more friendly, since, after division by $\e^3$ and $U^3$, it comes:
\begin{align}\label{perturbed-4th-order}
\frac{\partial}{\partial\tau} &\left (\sqrt{I-4\e D_{\eta\eta}} \right)\psi
=-4D_{\eta\eta\eta\eta}\psi-D_{\eta\eta}\psi \nonumber\\[1mm]
&+\frac{1}{4}\left\{(I-4\e D_{\eta\eta})^{\frac{3}{2}}-3(I-4\e
D_{\eta\eta})-4(1+\e)\left (\sqrt{I-4\e D_{\eta\eta}}-I\right )
\right\}(D_{\eta}\psi)^2,
\end{align}
which is the perturbed equation we are going to study, with periodic boundary conditions at
$\eta= \pm \ell_0/2$.

Mimicking \eqref{eq-4-order}, we rewrite \eqref{perturbed-4th-order} in the abstract way:
\begin{equation}\label{abstract-perturbed-4th-order}
\frac{d}{d\tau}{\mathscr B}_{\e}\psi = {\mathscr L}\psi  + {\mathscr F}_{\e}((\psi_{\eta})^2),
\end{equation}
where  the symbols of the operators ${\mathscr B}_{\e}$, ${\mathscr L}$ and ${\mathscr F}_{\e}$ are respectively:
\begin{align*}
&b_{\e,k}=  X_{\e,k},\\
&s_k=-\lambda_k(4\lambda_k-1),\\
&f_{\e,k}=\frac{1}{4}(X^3_{\e,k}-3X_{\e,k}^2-4(1+\e) X_{\e,k}+4+4\e),
\end{align*}
for any $k\ge 0$ and
\begin{eqnarray*}
X_{\varepsilon,k}=\sqrt{1+4\varepsilon\lambda_k},\qquad\;\,k\ge 0
\end{eqnarray*}

Writing \eqref{abstract-perturbed-4th-order} in discrete Fourier variable gives the infinitely many equations
\begin{eqnarray*}
b_{\e,k}\widehat\psi_{\tau}(\tau,k)=-\lambda_k(4\lambda_k-1)\widehat\psi(\tau,k)+f_{\e,k}\widehat{(\psi_{\eta})^2}(\tau,k),
\end{eqnarray*}
for any $k\ge 0$.  Note that the leading terms (namely at order $0$ in $\e$) of $b_{\e,k}$ and $f_{\e,k}$ are
$1$ and $-1/2$, respectively.

Fix $T>0$. For $\Phi_0\in H^m_{\sharp}$ ($m\ge 4$) the Cauchy problem
\begin{eqnarray*}
\left\{
\begin{array}{lll}
\Phi_\tau(\tau,\eta)= -4\Phi_{\eta\eta\eta\eta}(\tau,\eta)-
\Phi_{\eta\eta}(\tau,\eta)
- \frac{1}{2}(\Phi_\eta(\tau,\eta))^2, &\tau\ge 0, & |\eta|\le\frac{\ell_0}{2},\\[2mm]
D_{\eta}^k\Phi(\tau,-\ell_0/2)=D_{\eta}^k\Phi(\tau,\ell_0/2), &\tau\ge 0, & k\ge 0,\\[2mm]
\Phi(0,\eta)=\Phi_0(\eta), &&|\eta|\le \frac{\ell_0}{2}.
\end{array}
\right.
\end{eqnarray*}
admits a unique solution $\Phi\in C([0,T];H^m_{\sharp})$ such that
$\Phi_{\tau}\in C([0,T];H^{m-4}_{\sharp})$ (see e.g., \cite[App. B]{BHLS10}).

Through $\Phi$ we split $\psi= \Phi+ \e  \rho_\e$.
For simplicity, we take zero as the initial condition for $\rho_{\varepsilon}$
and, to avoid cumbersome
notation, in the sequel we usely write $\rho$ for $\rho_\e$.

If $\psi$ solves \eqref{abstract-perturbed-4th-order}, then
\begin{align}
\frac{\partial}{\partial\tau}{\mathscr B}_{\e}(\rho) +{\mathscr
H}_{\e}(\Phi_{\tau}) ={\mathscr L}(\rho)+{\mathscr
M}_{\e}((\Phi_{\eta})^2)+\e{\mathscr
F}_{\e}((\rho_{\eta})^2)+2{\mathscr F}_{\e}(\Phi_{\eta}\rho_{\eta}),
\label{pb-fin-ve}
\end{align}
where the symbols of the operators
${\mathscr H}_{\e}$ and ${\mathscr M}_{\e}$ are
\begin{align*}
h_{\e,k}=\frac{1}{\e}(X_{\e,k}-1),\qquad\;\,
m_{\e,k}=\frac{1}{4\e}(X^3_{\e,k}-3X_{\e,k}^2-4(1+\e)
X_{\e,k}+6+4\e),
\end{align*}
for any $k\ge 0$.

\begin{prop}
\label{prop-H} There exists a positive
constant $C_*$ such that the following properties are satisfied for any $\e\in (0,1]$:
\begin{enumerate}[\rm (a)]
\item
for any $s=2,3,\ldots$, the operators ${\mathscr B}_{\e}$ and
${\mathscr H}_{\e}$ admit bounded realizations $B_{\e}$ and
$H_{\e}$, respectively, mapping $H^s_{\sharp}$ into
$H^{s-2}_{\sharp}$. Moreover,
\begin{eqnarray*}
\|B_{\e}\|_{L(H^s_{\sharp},H^{s-2}_{\sharp})}
+\|H_{\e}\|_{L(H^s_{\sharp},H^{s-2}_{\sharp})}\le C_*.
\end{eqnarray*}
Finally, the operator
$B_{\e}$ is invertible from $H^s_{\sharp}$ to
$H^{s-2}_{\sharp}$;
\item
for any $s=3,4,\ldots$, the operators ${\mathscr F}_{\e}$ and ${\mathscr
M}_{\e}$ admit bounded realizations $F_{\e}$ and $M_{\e}$, respectively, mapping $H^s$ into $H^{s-3}$. Moreover,
\begin{eqnarray*}
\|F_{\e}\|_{L(H^s_{\sharp},H^{s-3}_{\sharp})}+\|M_{\e}\|_{L(H^s_{\sharp},H^{s-3}_{\sharp})}\le C_*.
\end{eqnarray*}
\end{enumerate}
\end{prop}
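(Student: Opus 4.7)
The plan is to prove all four bounds by direct symbol analysis, exploiting that, by Parseval, a Fourier multiplier with symbol $(t_k)_k$ sends $H^s_{\sharp}$ continuously into $H^{s-r}_{\sharp}$ as soon as $|t_k|\le C(1+\lambda_k)^{r/2}$ uniformly in $k$. So everything reduces to symbol estimates that are uniform in $\e\in(0,1]$. For $b_{\e,k}=\sqrt{1+4\e\lambda_k}$, using $\e\le 1$ one has $b_{\e,k}\le C(1+\lambda_k)^{1/2}$, which gives the $H^s_{\sharp}\to H^{s-2}_{\sharp}$ bound (with room to spare). For $h_{\e,k}=(X_{\e,k}-1)/\e$, the apparent singularity in $\e$ disappears by rationalization:
\[
h_{\e,k}=\frac{X_{\e,k}^{2}-1}{\e(X_{\e,k}+1)}=\frac{4\lambda_k}{X_{\e,k}+1}\le 2\lambda_k,
\]
yielding the $H^s_{\sharp}\to H^{s-2}_{\sharp}$ bound uniformly in $\e$. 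For $f_{\e,k}$ a direct polynomial estimate $|f_{\e,k}|\le C(1+X_{\e,k}^{3})\le C(1+\lambda_k)^{3/2}$ gives the $H^s_{\sharp}\to H^{s-3}_{\sharp}$ bound.

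The main obstacle is the estimate on $m_{\e,k}$, whose defining expression has an $\e^{-1}$ prefactor that is singular as $\e\to 0$ and which must be cancelled before any polynomial-in-$\lambda_k$ bound becomes available. The starting point is the observation that the cubic $P(x)=x^{3}-3x^{2}-4(1+\e)x+6+4\e$ vanishes at $x=1$, so
\[
m_{\e,k}=\frac{X_{\e,k}-1}{4\e}\bigl(X_{\e,k}^{2}-2X_{\e,k}-6-4\e\bigr),
\]
and the first factor equals $\lambda_k/(X_{\e,k}+1)$, already controlled as in the analysis of $h_{\e,k}$. For the second factor I substitute $X_{\e,k}^{2}-1=4\e\lambda_k$ to obtain $X_{\e,k}^{2}-2X_{\e,k}-6-4\e=4\e\lambda_k-2X_{\e,k}-5-4\e$, so that
\[
m_{\e,k}=\frac{4\e\lambda_k^{2}}{X_{\e,k}+1}-\frac{\lambda_k(2X_{\e,k}+5+4\e)}{X_{\e,k}+1}.
\]
In the first summand the identity $4\e\lambda_k=X_{\e,k}^{2}-1\le X_{\e,k}(X_{\e,k}+1)$ gives $4\e\lambda_k/(X_{\e,k}+1)\le X_{\e,k}\le C(1+\lambda_k)^{1/2}$, so this term is bounded by $C(1+\lambda_k)^{3/2}$; the second summand is bounded by $C\lambda_k$ since $(2X_{\e,k}+5+4\e)/(X_{\e,k}+1)$ is uniformly bounded. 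Altogether, $|m_{\e,k}|\le C(1+\lambda_k)^{3/2}$ uniformly in $\e\in(0,1]$, whence the $H^s_{\sharp}\to H^{s-3}_{\sharp}$ bound for $M_{\e}$.

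Finally, the invertibility of $B_{\e}$ claimed in (a) follows immediately from $b_{\e,k}\ge 1$ for every $k$: the Fourier multiplier with symbol $1/b_{\e,k}\le 1$ is bounded between the relevant Sobolev spaces and provides the inverse, exactly as in Lemma \ref{B-invert}. The uniform constant $C_{*}$ in the statement is then obtained as the maximum of the finitely many universal constants produced by the four symbol estimates above.
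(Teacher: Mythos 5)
Your proof is correct and follows essentially the same route as the paper: reduce each boundedness claim to a uniform-in-$\e$ estimate on the Fourier symbol, with the key points being the rationalization $h_{\e,k}=4\lambda_k/(X_{\e,k}+1)\le 2\lambda_k$ and the factorization of the cubic at $x=1$ to remove the $\e^{-1}$ singularity in $m_{\e,k}$. The paper states the resulting bounds $|h_{\e,k}|\le 4\lambda_k$ and $|m_{\e,k}|\le 2\lambda_k^{3/2}+25\lambda_k$ without computation and then deduces the bounds on $b_{\e,k}$ and $f_{\e,k}$ from the identities $b_{\e,k}=\e h_{\e,k}+1$ and $f_{\e,k}=\e m_{\e,k}-\tfrac12$, whereas you estimate $b_{\e,k}$ and $f_{\e,k}$ directly; this is an immaterial difference, and your argument in fact supplies the details the paper labels ``without much effort.''
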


\begin{proof}
The statement follows from an analysis of the symbols of the operators ${\mathscr B}_{\e}$, ${\mathscr F}_{\e}$,
${\mathscr H}_{\e}$ and ${\mathscr M}_{\e}$. Without much effort one can show that
\begin{eqnarray*}
|h_{\e,k}|\le 4\lambda_k,\qquad\;\,
|m_{\e,k}|\le 2\lambda_k^{\frac{3}{2}}
+25\lambda_k,
\end{eqnarray*}
for any $k\ge 0$ and any $\e\in (0,1]$. These estimates combined with
the formulas $0\neq b_{\e,k}=\e h_{\e,k}+1$
and $f_k=\e m_{\e,k}-1/2$, for any
$k\ge 0$ and any $\varepsilon\in (0,1]$, yield the assertion.
\end{proof}

Instead of studying Equation \eqref{eq-4-order}, we find it much more convenient to deal with the equation satisfied by $\zeta:=\rho_{\eta}$, i.e.,
\begin{equation}
\frac{\partial}{\partial\tau}{\mathscr B}_{\e}(\zeta) +{\mathscr H}_{\e}(\Psi_{\tau}) ={\mathscr L}(\zeta)+{\mathscr M}_{\e}((\Psi^2)_{\eta})+\e {\mathscr F}_{\e}((\zeta^2)_{\eta})+2{\mathscr F}_{\e}((\Psi\zeta)_{\eta}),
\label{eq-final-ve-1}
\end{equation}
which we couple with the initial condition $\zeta(0,\cdot)=0$. Here, $\Psi=\Phi_{\eta}$.

\subsection{A priori estimates}\label{Formal-estimates}

For any $n=0,1,2,\ldots$ and any $T>0$, we set
\begin{eqnarray*}
X_n(T)=\left\{\zeta\in C([0,T];H^{4\vee 2n}_{\sharp})\cap C^1([0,T];L^2):\zeta_{\tau}\in C([0,T];H^{2\vee (n+1)}_{\sharp})\right\},
\end{eqnarray*}
where $a\vee b:=\max\{a,b\}$.

For any $\e>0$, we introduce in $H^{1/2}_{\sharp}$ the norm
$\|\zeta\|_{\frac{1}{2},\e}^2=\sum_{k=0}^{+\infty} \sqrt{1+4\e \lambda_k}\, |\widehat\zeta(k)|^2$ for any $\zeta\in H^{1/2}_{\sharp}$.
Note that, for any fixed $\e>0$, $\|\cdot\|_{\frac{1}{2},\e}$ is a norm, equivalent to
the usual norm in $H^{1/2}_{\sharp}$.

The main result of this subsection is contained in the following theorem, where we set $\Psi_0=(\Phi_0)_{\eta}$.

\begin{thm}
\label{cor-apriori-estim} Fix an integer $n\geq 0$ and $T>0$.
Further, suppose that $\Psi_0\in H^{n+6}_{\sharp}$.
Then, there exist
$\varepsilon_1=\e_1(n,T)\in (0,1)$ and $K_n=K_n(n,T)>0$ such that, if
$\zeta\in X_n(T_1)$ is a solution on the time interval $[0,T_1]$ of
Equation \eqref{eq-final-ve-1} for some $T_1\le T$, then
\begin{align}\label{main-estimate}
 \sup_{\tau\in [0,T_1]}\|D^{n}_{\eta}\zeta(\tau,\cdot)\|_{\frac{1}{2},\e }^2\le K_{n},
\end{align}
whenever $0<\varepsilon \leq\varepsilon_1$.
\end{thm}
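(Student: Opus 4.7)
The strategy is a weighted, $\e$-uniform energy estimate closed by Gronwall. The key structural observation is that $B_\e$ is a self-adjoint Fourier multiplier with real positive symbol, so its $L^2$-quadratic form is precisely $\|\cdot\|_{\frac12,\e}^2$. Applying $D_\eta^n$ to \eqref{eq-final-ve-1} and testing with $v := D_\eta^n\zeta$ in $L^2$ produces, after using that $B_\e$ commutes with $D_\eta$ and is self-adjoint,
\begin{align*}
\frac{1}{2}\frac{d}{d\tau}\|v\|_{\frac12,\e}^2 &= \langle Lv,v\rangle - \langle H_\e D_\eta^n\Psi_\tau,v\rangle + \langle M_\e D_\eta^{n+1}\Psi^2,v\rangle \\
&\quad + 2\langle F_\e D_\eta^{n+1}(\Psi\zeta),v\rangle + \e\langle F_\e D_\eta^{n+1}\zeta^2,v\rangle.
\end{align*}
The symbol computation $s_k = -4\lambda_k^2 + \lambda_k$ yields the dissipative identity $\langle Lv,v\rangle = -4|D_{\eta\eta}v|_2^2 + |D_\eta v|_2^2 \le -3|D_{\eta\eta}v|_2^2 + C|v|_2^2$ via Young's inequality. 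The $\Psi$-dependent source terms are controlled uniformly in $\e$ using Proposition \ref{prop-H}(a)--(b) together with the smoothness of the Kuramoto-Sivashinsky solution $\Phi \in C([0,T];H^m_\sharp)$, $\Phi_\tau \in C([0,T];H^{m-4}_\sharp)$, available provided $m$ is sufficiently large relative to $n$ (say $m \ge n+7$, so that $\Psi = \Phi_\eta \in H^{n+6}_\sharp$ as hypothesized).

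The two genuinely nonlinear terms I would handle through the identity $F_\e = \e M_\e - \tfrac12 I$, which is immediate from $f_{\e,k} = \e m_{\e,k} - \tfrac12$. This splits each contribution into a principal first-order piece with coefficient $-\tfrac12$ and a remainder carrying an extra $\e$ prefactor. For $\langle F_\e D_\eta^{n+1}(\Psi\zeta),v\rangle$, one integration by parts in $\eta$ combined with Leibniz expansion expresses the result as a sum of products of derivatives of $\Psi$ (uniformly bounded in $\tau$) and of $\zeta$ (of order at most $n$) paired with $D_\eta v$; Cauchy--Schwarz and Young's inequality with a small constant then absorb a fraction of $|D_{\eta\eta}v|_2^2$ into the dissipation from the $\langle Lv,v\rangle$ term. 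For $\e\langle F_\e D_\eta^{n+1}\zeta^2,v\rangle$, the same decomposition produces $-\tfrac{\e}{2}\langle D_\eta^{n+1}\zeta^2,v\rangle + \e^2\langle M_\e D_\eta^{n+1}\zeta^2,v\rangle$; after one IBP the Banach algebra property of $H^s_\sharp$ ($s > \tfrac12$) controls each quadratic factor $D_\eta^j\zeta \cdot D_\eta^{n-j}\zeta$ in $L^2$ by lower-order norms of $\zeta$ already handled inductively, while the crucial $\e$-prefactor converts the remaining $D_\eta v$-factor into something absorbable into $|D_{\eta\eta}v|_2^2$.

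Collecting all estimates and proceeding inductively on $n$ (using the bounds on $\|D_\eta^j\zeta\|_{\frac12,\e}$ for $j < n$ to close the lower-order nonlinear interactions) gives a differential inequality
\begin{align*}
\frac{d}{d\tau}\|v\|_{\frac12,\e}^2 + |D_{\eta\eta}v|_2^2 \le C_n(T)\bigl(1 + \|v\|_{\frac12,\e}^2\bigr),
\end{align*}
valid whenever $0 < \e \le \e_1$ for some $\e_1 = \e_1(n,T)$ small enough. Since $v(0,\cdot) = 0$, Gronwall's lemma then delivers \eqref{main-estimate}. The main obstacle is precisely the fully nonlinear term $\e\langle F_\e D_\eta^{n+1}\zeta^2, v\rangle$: since $F_\e$ is of order $3$ in the Sobolev scale and offers no regularity gain, the $\e$-prefactor is the \emph{sole} quantitative mechanism allowing the top-order contribution to be absorbed into the fourth-order dissipation provided by $L$; the delicate balance in Young's inequality and the uniform bound on $\e^2 M_\e$ supplied by Proposition \ref{prop-H} are both essential to keep the Gronwall constant finite as $\e \to 0$.
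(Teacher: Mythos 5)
Your high-level setup matches the paper's: differentiate $n$ times, test with $(-1)^nD^{2n}_\eta\zeta$, use that $B_\e$ is a real-symbol Fourier multiplier to produce the exact identity $\tfrac12\tfrac{d}{d\tau}\|D^n_\eta\zeta\|^2_{\frac12,\e}$, integrate by parts, and bound the $\Psi$-source terms via the uniform bounds on $H_\e$, $M_\e$ from Proposition~\ref{prop-H}. There is one cosmetic divergence from the paper and one genuine gap.

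The cosmetic one: you decompose $F_\e=\e M_\e-\tfrac12 I$ and then invoke only the $\e$-uniform operator norm $\|M_\e\|_{L(H^s_\sharp,H^{s-3}_\sharp)}\le C_*$. The paper instead bounds the symbol directly as $|f_{\e,k}|\le c\big(\e^{3/2}\lambda_k^{3/2}+\e\lambda_k+\e^{1/2}\lambda_k^{1/2}+1\big)$, which distributes the small powers of $\e$ across the orders of derivative and yields the intermediate bilinear estimate \eqref{estim-6}. Both routes lead to an $\e$-weighted multilinear estimate with the same structure, so this is a matter of bookkeeping.

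The genuine gap is the final closure. The top-order contribution of $\e\langle F_\e D_\eta^{n+1}\zeta^2,(-1)^nD_\eta^{2n}\zeta\rangle$ is of the form $c\,\e^{3/2}|D^n_\eta\zeta|_2\,|D^{n+2}_\eta\zeta|_2^2$. Thus the coefficient multiplying the full dissipation $|D^{n+2}_\eta\zeta|_2^2$ involves the \emph{unknown} quantity $|D^n_\eta\zeta|_2\sim A_\e^{1/2}$, where $A_\e(\tau)=\|D^n_\eta\zeta(\tau,\cdot)\|^2_{\frac12,\e}$ is precisely the object you are trying to bound. Your assertion that ``the crucial $\e$-prefactor converts the remaining $D_\eta v$-factor into something absorbable into $|D_{\eta\eta}v|^2_2$'' is therefore circular: it presupposes $A_\e$ is already bounded independently of $\e$. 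What one actually obtains is, as in the paper,
\begin{eqnarray*}
\frac{d}{d\tau}A_\e+\big(2-\e K(\Psi)-c\,\e^2 A_\e\big)\,|D^{n+2}_\eta\zeta|_2^2 \;\le\; K(\Psi)+K(\Psi)A_\e+c\,\e A_\e^2,
\end{eqnarray*}
which is neither linear nor closable by a direct Gronwall argument: the coefficient of the dissipation may change sign, and the right-hand side carries a quadratic term $\e A_\e^2$. The paper closes this via Lemma~\ref{lem-5}, a bootstrap/continuity argument: since $A_\e(0)=0$, one considers the maximal interval $(0,T_\e)$ on which the prefactor of the dissipation remains positive; on that interval the nonnegative dissipation can be discarded, a Riccati-type ODE bound (from \cite{BFHLS}) controls $A_\e$ by a constant depending only on $T$, and for $\e$ small this constant is small enough to force $T_\e=T_1$. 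Your proposal contains no such continuity step, and in particular the clean linear inequality $\tfrac{d}{d\tau}\|v\|^2_{\frac12,\e}+|D_{\eta\eta}v|^2_2\le C_n(T)(1+\|v\|^2_{\frac12,\e})$ that you claim is not actually reachable; without the bootstrap the a priori estimate does not close.
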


Note that the assumptions on $\Psi_0$ guarantee that $\Psi$ belongs to $C([0,T];H^{n+4}_{\sharp})\cap C^1([0,T];H^{n+2}_{\sharp})$.

The proof of Theorem \ref{cor-apriori-estim} heavily relies on the following lemma.

\begin{lem}
\label{lem-5} Let $A_0$, $c_0$, $c_1$, $c_2$, $c_3$, $\e$, $T_0$, $T_1$ be positive constants with $T_1<T_0$.
Further, let $f_{\e},A_{\e}:[0,T_1]\to\mathbb R$
be a positive continuous function and a positive continuously differentiable function such that
\begin{align*}
\left\{
\begin{array}{ll}
A_{\e}'(\tau)+(c_0-\e^2 (A_{\e}(\tau))^2)f_{\e}(\tau)\le c_1+c_2A_{\e}(\tau)+c_3\e (A_{\e}(\tau))^2, & \tau\in [0,T_1],\\[2mm]
A_{\e}(0)=0.
\end{array}
\right.
\end{align*}
Then, there exist $\e_1=\e_1(T_0)\in (0,1)$ and
a constant $K=K(T_0)$ such that $A_{\e}(\tau)\le K$ for any
$\tau\in [0,T_1]$ and any $\e\in (0,\e_1]$.
\end{lem}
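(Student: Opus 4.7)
The plan is a bootstrap (continuous induction) argument that exploits two structural features of the differential inequality: the term $(c_0-\varepsilon^2 A_\varepsilon^2)f_\varepsilon$ is \emph{nonnegative} as soon as $\varepsilon A_\varepsilon(\tau)\le\sqrt{c_0}$, so it can simply be discarded, and in that regime the remaining quadratic obstruction $c_3\varepsilon A_\varepsilon^2$ carries a small factor $\varepsilon$ that turns it into a harmless perturbation of the linear comparison problem $B'=c_1+c_2 B$, $B(0)=0$. The explicit solution of the latter is bounded on $[0,T_0]$ by $K_0:=\frac{c_1}{c_2}(e^{c_2 T_0}-1)$, which depends only on $T_0$ and the universal constants $c_1,c_2$. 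Set the target bound $K:=K_0+1$.

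Choose $\varepsilon_1\in (0,1)$ small enough that the two smallness conditions
\[
2\varepsilon_1 K\le\sqrt{c_0}\qquad\text{and}\qquad \frac{4c_3\varepsilon_1 K^2}{c_2}\bigl(e^{c_2 T_0}-1\bigr)\le 1
\]
both hold; each is achievable because the left-hand sides are continuous in $\varepsilon_1$ and vanish at $\varepsilon_1=0$, and both quantities depend only on $T_0$ (and on $c_0,c_1,c_2,c_3$). Now fix $\varepsilon\in(0,\varepsilon_1]$ and any $T_1<T_0$, and define
\[
\tau^*:=\sup\{\tau\in [0,T_1]:A_\varepsilon(s)\le 2K\text{ for every }s\in [0,\tau]\}.
\]
Continuity of $A_\varepsilon$ and $A_\varepsilon(0)=0$ force $\tau^*>0$. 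On $[0,\tau^*]$ the first smallness condition ensures $c_0-\varepsilon^2 A_\varepsilon(\tau)^2\ge 0$, so the inequality of the lemma reduces to
\[
A_\varepsilon'(\tau)\le c_1+c_2 A_\varepsilon(\tau)+c_3\varepsilon A_\varepsilon(\tau)^2\le (c_1+4c_3\varepsilon K^2)+c_2 A_\varepsilon(\tau),
\]
where the pointwise estimate $A_\varepsilon^2\le 4K^2$ was used. A standard Grönwall integration and the second smallness condition then yield $A_\varepsilon(\tau)\le K_0+1=K$ for all $\tau\in[0,\tau^*]$.

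To close the bootstrap, observe that the strict inequality $A_\varepsilon(\tau^*)\le K<2K$ combined with continuity would allow the defining set of $\tau^*$ to be extended slightly beyond $\tau^*$ unless $\tau^*=T_1$; maximality therefore forces $\tau^*=T_1$, and the desired bound $A_\varepsilon\le K$ holds on the whole of $[0,T_1]$. The argument is essentially mechanical; the only delicate point, and thus the sole obstacle to address carefully, is the requirement that $\varepsilon_1$ and $K$ depend only on $T_0$ and not on the particular $T_1<T_0$ we start from. This uniformity is automatic in the scheme above because every estimate is dominated by the $T_0$-endpoint of the $\varepsilon$-independent linear comparison ODE.
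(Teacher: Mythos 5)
Your proof is correct, and it takes a route that is noticeably more self-contained than the paper's. The paper first handles the case $f_\e\equiv 0$ by citing an external Riccati-type lemma (Lemma 3.1 of the $\kappa$-$\theta$ paper by Brauner, Frankel, Hulshof, Lunardi and Sivashinsky), which gives explicit values $\e_1=3c_2^2/(16c_1c_3(e^{c_2T_0}-1))$ and $K=4c_1e^{c_2T_0}/(3c_2)$ for the differential inequality $A_\e'\le c_1+c_2A_\e+c_3\e A_\e^2$; it then introduces the set where $c_0-\e^2 A_\e^2>0$, drops the $f_\e$ term there, applies the cited bound, and closes with a continuation argument once $\e$ is small enough that the cited bound keeps $c_0-\e^2A_\e^2$ strictly positive. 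You instead run a single bootstrap on the threshold $2K$ with $K=K_0+1$ coming from the \emph{linear} comparison ODE, use the a priori bound $A_\e\le 2K$ both to discard the $(c_0-\e^2A_\e^2)f_\e$ term and to absorb $c_3\e A_\e^2\le 4c_3\e K^2$ into the constant term, and then invoke only linear Gr\"onwall. The net effect is that you avoid the external Riccati lemma entirely at the modest price of non-sharp constants and one extra smallness condition on $\e_1$; the paper's approach gives cleaner explicit thresholds but is not self-contained. Both correctly produce $\e_1$ and $K$ depending only on $T_0$ (and $c_0,\dots,c_3$), independent of $T_1$.

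One small remark on exposition: strictly speaking, the definition of $\tau^*$ by a supremum and the continuity of $A_\e$ give $A_\e(s)\le 2K$ on the closed interval $[0,\tau^*]$ (not just the open one), which is exactly what you need to apply the inequality up to and including $\tau^*$; it is worth stating this explicitly so the step $A_\e(\tau^*)\le K$ is airtight. Also, the phrase ``strict inequality $A_\e(\tau^*)\le K$'' should read ``the bound $A_\e(\tau^*)\le K<2K$''—the strictness you use is $K<2K$, not strictness in the Gr\"onwall estimate itself. Neither issue affects the validity of the argument.
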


\begin{proof}
When $f_{\e}$ identically vanishes, the proof follows from \cite[Lemma 3.1]{BFHLS}, which shows that we can
take $\e_1(T_0)=3c_2^2/(16c_1c_3(e^{c_2T_0}-1))$ and $K\le 4c_1e^{c_2T_0}/(3c_2)$.

Let us now consider the general case when $f_{\e}$ does not
identically vanish in $[0,T_1]$. We fix $\e_0=\e_0(T_0)\le3c_2^2/(16c_1c_3(e^{c_2T_0}-1))$ such
that $9c_0c_2^2-12c_1c_2e^{c_2T_0}\e_0-16c_1^2e^{2c_2T_0}\e_0^2>0$, and $\e\in (0,\e_0]$.
We claim that $c_0-\e^2 (A_{\e}(\tau))^2>0$ for any $\tau\in [0,T_1]$.

Let $(0,T_{\ve})$ be the largest interval (possibly depending on $\varepsilon$) where
$c_0-\e A_{\e}-\e^2 (A_{\e})^2$ is positive. The existence of this interval is clear since $A_{\e}$
vanishes at $0$. The positivity of $c_0-\e^2 (A_{\e})^2$ in $(0,T_{\e})$ shows that
$A_{\e}'\le c_1+c_2A_{\e}+c_3\e A_{\e}^2$ in such an interval and, from the above result, we can infer that
$A_{\e}(\tau)\le (4c_1e^{c_2T_0})/(3c_2)$ for any $\tau\in [0,T_{\e}]$, so that
$c_0-\e^2 (A_{\e}(T_{\e}))^2>0$. By the definition of $T_{\e}$, this clearly implies that $T_{\e}=T_1$.
\end{proof}

\begin{proof}[Proof of Theorem $\ref{cor-apriori-estim}$]
Throughout the proof we assume that $T_1\le T$ is fixed and $\e$ and $\tau$ are arbitrarily fixed in $(0,1]$ and in $[0,T_1]$, respectively.
Moreover,  to avoid cumbersome notation, we denote by $c$ almost all the constants appearing in the estimates.
Hence, the exact value of $c$ may change from line to line, but we do not need to follow the constants throughout the estimates.
We just need to stress how the estimates depend on $\e$. As a matter of fact, all the $c$'s are independent not only of $\e$ but also of $\tau$, $\Psi$ and $\zeta$. On the contrary, they may depend on $n$ (and, actually, in most the cases they do).
Finally, we denote by $K(\Psi)$ a constant which may depend on $n$ and also on $\Psi$. As above $K(\Psi)$ may vary from estimate to estimate.

The first step of the proof consists in multiplying both sides of Equation \eqref{eq-final-ve-1} by
$(-1)^n D^{2n}_{\eta}\zeta$ and integrating by parts over
$(-\ell_0/2,\ell_0/2)$. This yields to the equation
\begin{align}
&\int_{-\frac{\ell_0}{2}}^{\frac{\ell_0}{2}}B_{\e}(\zeta_{\tau}(\tau,\cdot))(-1)^n D^{2n}_{\eta}\zeta(\tau,\cdot)d\eta
+4\int_{\frac{\ell_0}{2}}^{\frac{\ell_0}{2}}|D^{n+2}_{\eta}\zeta(\tau,\cdot)|^2d\eta\\
&-\int_{\frac{\ell_0}{2}}^{\frac{\ell_0}{2}}|D^{n+1}_{\eta}\zeta(\tau,\cdot)|^2d\eta
\notag \\
=&  -\int_{-\frac{\ell_0}{2}}^{\frac{\ell_0}{2}}\left
(H_{\e}(\Psi_{\tau}(\tau,\cdot))
-M_{\e}((\Psi^2)_{\eta}(\tau,\cdot))\right ) (-1)^n D^{2n}_{\eta}\zeta(\tau,\cdot)d\eta\notag\\
&+\e\int_{-\frac{\ell_0}{2}}^{\frac{\ell_0}{2}}F_{\e}((\zeta^2)_{\eta}(\tau,\cdot))
(-1)^n D^{2n}_{\eta}\zeta(\tau,\cdot)d\eta\\
&+2\int_{-\frac{\ell_0}{2}}^{\frac{\ell_0}{2}}F_{\e}((\Psi\zeta)_{\eta}(\tau,\cdot))
(-1)^n D^{2n}_{\eta}\zeta(\tau,\cdot)d\eta.
\label{variational}
\end{align}

Using Parseval's formula and the definition
of the symbol $b_{\e,k}$, one can easily show that
\begin{align}
\int_{-\frac{\ell_0}{2}}^{\frac{\ell_0}{2}}B_{\e}(\zeta_{\tau}(\tau,\cdot)) (-1)^n D^{2n}_{\eta}\zeta(\tau,\cdot)d\eta= \frac{1}{2}\frac{d}{d\tau}\|D^n_{\eta}\zeta(\tau,\cdot)\|_{\frac{1}{2},\e}^2.
\label{LLLL}
\end{align}

We now deal with the other terms in \eqref{variational}.
Integrating $n$-times by parts and, then, using  Poincar\'e-Wirtinger and Cauchy-Schwarz
inequalities, jointly with Proposition \ref{prop-H}, it is not difficult to show that
\begin{align}
&\left |\int_{-\frac{\ell_0}{2}}^{\frac{\ell_0}{2}}\left
(H_{\e}(\Psi_{\tau}(\tau,\eta))-M_{\e}((\Psi^2)_{\eta}(\tau,\cdot))\right
)(-1)^n D^{2n}_{\eta}\zeta(\tau,\cdot)d\eta\right | \leq K(\Psi)+|D^n_{\eta}
\zeta(\tau,\cdot)|_2^2, \label{estim-bad-1}
\end{align}
for any $\zeta\in X_n(T_1)$.

Estimating the other two integral terms in the right-hand side of \eqref{variational} demands some more effort.
The starting point is the following
estimate
\begin{align}
&\left |\int_{-\frac{\ell_0}{2}}^{\frac{\ell_0}{2}}F_{\e}(\chi_{\eta}(\tau,\cdot))(-1)^n D^{2n}_{\eta}\zeta(\tau,\cdot)
d\eta\right |\notag\\[1mm]
\le &c\e^{\frac{3}{2}}|D^{n+2}_{\eta}\chi(\tau,\cdot)|_2|D^{n+2}_{\eta} \zeta(\tau,\cdot)|_2
+c\e|D^{n+2}_{\eta}\chi(\tau,\cdot)|_2|D^{n+1}_{\eta}\zeta(\tau,\cdot)|_2\notag\\[1mm]
&+c\sqrt{\e}|D^n_{\eta}\chi(\tau,\cdot)|_2|D^{n+2}_{\eta}\zeta(\tau,\cdot)|_2
+c|D^n_{\eta}\chi(\tau,\cdot)|_2|D^{n+1}_{\eta}\zeta(\tau,\cdot)|_2,
\label{estim-6}
\end{align}
which holds true for any $\chi\in C([0,T_1];H^{4\vee 2n}_{\sharp})$.
Such a formula follows observing that
\begin{align*}
&\left
|\int_{-\frac{\ell_0}{2}}^{\frac{\ell_0}{2}}F_{\e}(\chi_{\eta}(\tau,\cdot))(-1)^n D^{2n}_{\eta}\zeta(\tau,\cdot)
d\eta\right |\\
\le&\sum_{k=0}^{+\infty}\lambda_k^n|f_{\e,k}||\widehat{\chi_{\eta}}(\tau,k)||\widehat\zeta(\tau,k)|\\
\le& c\sum_{k=0}^{+\infty}\lambda_k^n\left (\e^{\frac{3}{2}}\lambda_k^{\frac{3}{2}}+\e\lambda_k+\e^{\frac{1}{2}}\lambda_k^{\frac{1}{2}}+1\right )
|\widehat{\chi_{\eta}}(\tau,k)||\widehat\zeta(\tau,k)|,
\end{align*}
and, then, using Young inequality, to estimate the terms in the round brackets.

Now, we plug $\chi=\zeta^2$ into \eqref{estim-6} and use the estimates
\begin{align}
&|D^{n+2}_{\eta}(\zeta(\tau,\cdot))^2|_2 \leq c (|D^{n+2}_{\eta}\zeta(\tau,\cdot)|_2 |D^{n}_{\eta}\zeta(\tau,\cdot)|_2 + |D^{n+1}_{\eta}\zeta(\tau,\cdot)|_2 ^2),\label{estim-7}
\\[1mm]
&|D^{n}_{\eta}(\zeta(\tau,\cdot))^2|_2 \leq c|D^{n}_{\eta}\zeta(\tau,\cdot)|_2^2,
\label{estim-8}
\end{align}
(which can be obtained using the Poincar\'e-Wirtinger inequality and Leibniz formula),
the Cauchy-Schwarz inequality and, again, the Poincar\'e-Wirtinger inequality, to obtain
\begin{align}
&\left |\int_{-\frac{\ell_0}{2}}^{\frac{\ell_0}{2}}F_{\e}((\zeta^2)_{\eta}(\tau,\cdot))(-1)^n D^{2n}_{\eta}\zeta(\tau,\cdot)d\eta\right |\notag\\[1mm]
\le & c\e^{\frac{3}{2}}|D^n_{\eta}\zeta(\tau,\cdot)|_2|D^{n+2}_{\eta}\zeta(\tau,\cdot)|_2^2
+c\e^{\frac{3}{2}}|D^{n+1}_{\eta}\zeta(\tau,\cdot)|_2^2|D^{n+2}_{\eta}\zeta(\tau,\cdot)|_2\notag\\[1mm]
&+c\e|D^n_{\eta}\zeta_{\eta}(\tau,\cdot)|_2|D^{n+1}_{\eta}\zeta(\tau,\cdot)|_2|D^{n+2}_{\eta}\zeta(\tau,\cdot)|_2
+c\e|D^{n+1}_{\eta}\zeta(\tau,\cdot)|_2^3\notag\\[1mm]
&+c\e^{\frac{1}{2}}(1+\e)|D^n_{\eta}\zeta(\tau,\cdot)|_2^2|D^{n+2}_{\eta}\zeta(\tau,\cdot)|_2
+c|D^n_{\eta}\zeta(\tau,\cdot)|_2^2|D^{n+1}_{\eta}\zeta(\tau,\cdot)|_2\notag\\[1mm]
\le &c\e^{\frac{3}{2}}|D^n_{\eta}\zeta(\tau,\cdot)|_2|D^{n+2}_{\eta}\zeta(\tau,\cdot)|_2^2
+c\e^{\frac{3}{2}}|D^{n+1}_{\eta}\zeta(\tau,\cdot)|_2^2|D^{n+2}_{\eta}\zeta(\tau,\cdot)|_2\notag\\[1mm]
&+c\e|D^{n+1}_{\eta}\zeta(\tau,\cdot)|_2^2|D^{n+2}_{\eta}\zeta(\tau,\cdot)|_2
+c\e|D^{n+1}_{\eta}\zeta(\tau,\cdot)|_2^2|D^{n+2}_{\eta}\zeta(\tau,\cdot)|_2\notag\\[1mm]
&+c\e^{\frac{1}{2}}|D^n_{\eta}\zeta(\tau,\cdot)|_2^2|D^{n+2}_{\eta}\zeta(\tau,\cdot)|_2
+c|D^n_{\eta}\zeta(\tau,\cdot)|_2^2|D^{n+2}_{\eta}\zeta(\tau,\cdot)|_2\notag\\[1mm]
\le & c\e^{\frac{3}{2}}|D^n_{\eta}\zeta(\tau,\cdot)|_2|D^{n+2}_{\eta}\zeta(\tau,\cdot)|_2^2
+c\e^2|D^{n+1}_{\eta}\zeta(\tau,\cdot)|_2^4+c\e|D^{n+2}_{\eta}\zeta(\tau,\cdot)|_2^2\notag\\
&+c|D^n_{\eta}\zeta(\tau,\cdot)|_2^4+c|D^{n+2}_{\eta}\zeta(\tau,\cdot)|_2^2\notag\\[1mm]
\le & c\e^{\frac{3}{2}}|D^n_{\eta}\zeta(\tau,\cdot)|_2|D^{n+2}_{\eta}\zeta(\tau,\cdot)|_2^2
+c\e^2|D^{n+1}_{\eta}\zeta(\tau,\cdot)|_2^4+c|D^{n+2}_{\eta}\zeta(\tau,\cdot)|_2^2+c|D^n_{\eta}\zeta(\tau,\cdot)|_2^4.
\label{estim-bad-2}
\end{align}

In the similar way, using the estimate
$|D^m_{\eta}(\Psi\zeta)(\tau,\cdot)|_2\le c|D^m_{\eta}\zeta(\tau,\cdot)|_2|D^m_{\eta}\Psi(\tau,\cdot)|_2$, (with $m\in\{n,n+2\}$)
in place of \eqref{estim-7} and \eqref{estim-8},
from \eqref{estim-6} we get
\begin{align}
&\left
|\int_{-\frac{\ell_0}{2}}^{\frac{\ell_0}{2}}F_{\e}((\Psi\zeta)_{\eta}(\tau,\cdot))(-1)^n D^{2n}_{\eta}\zeta(\tau,\cdot)
d\eta\right |\notag\\[1mm]
\le & c\e^{\frac{3}{2}}|D^{n+2}_{\eta}\Psi(\tau,\cdot)|_2|D^{n+2}_{\eta}\zeta(\tau,\cdot)|_2^2
+c\e |D^{n+2}_{\eta}\Psi(\tau,\cdot)|_2|D^{n+1}_{\eta}\zeta(\tau,\cdot)|_2^2\notag\\[1mm]
&+c\e |D^{n+2}_{\eta}\Psi(\tau,\cdot)|_2|D^{n+2}_{\eta}\zeta(\tau,\cdot)|_2^2
+c|D^n_{\eta}\Psi(\tau,\cdot)|_2|D^n_{\eta}\zeta(\tau,\cdot)|_2^2\notag\\[1mm]
&+c\e |D^n_{\eta}\Psi(\tau,\cdot)|_2|D^{n+2}_{\eta}\zeta(\tau,\cdot)|_2^2
+c\delta^{-1}|D^n_{\eta}\Psi(\tau,\cdot)|_2^2|D^n_{\eta}\zeta(\tau,\cdot)|_2^2
+c\delta|D^{n+2}_{\eta}\zeta(\tau,\cdot)|_2^2,
\label{LLL}
\end{align}
for any $\delta>0$. We just mention the inequality
\begin{align*}
&|D^{n}\Psi(\tau,\cdot)|_2|D^n\zeta(\tau,\cdot)|_2|D^{n+1}\zeta(\tau,\cdot)|_2\\[1mm]
\le &
c\delta^{-1}|D^{n}\Psi(\tau,\cdot)|_2^2|D^n\zeta(\tau,\cdot)|_2^2
+\delta |D^{n+1}\zeta(\tau,\cdot)|_2^2,
\end{align*}
obtained by means of Young and Poincar\'e-Wirtinger inequalities, which we use to estimate one of the intermediate terms appearing
in the proof of \eqref{LLL}.

Now, taking $c\delta=5/2$, we get the estimate
\begin{align}
&\left|\int_{-\frac{\ell_0}{2}}^{\frac{\ell_0}{2}}F_{\e}((\Psi\zeta)_{\eta}(\tau,\cdot))
(-1)^n D^{2n}_{\eta}\zeta(\tau,\cdot)d\eta\right |\notag\\
\le &K(\Psi)\left (\e|D^{n+2}_{\eta}\zeta(\tau,\cdot)|_2^2+\e|D^{n+1}_{\eta}\zeta(\tau,\cdot)|_2^2
+|D^n_{\eta}\zeta(\tau,\cdot)|_2^2\right )
+\frac{5}{2}|D^{n+2}_{\eta}\zeta(\tau,\cdot)|_2^2.
\label{estim-bad-3}
\end{align}

From \eqref{variational}, \eqref{LLLL}, \eqref{estim-bad-1}, \eqref{estim-bad-2}, \eqref{estim-bad-3} and
the interpolative inequality
\begin{eqnarray*}
|D^{n+1}_{\eta}\zeta(\tau,\cdot)|_2^2\le |D^n_{\eta}\zeta(\tau,\cdot)|_2^2+\frac{1}{4}|D^{n+2}_{\eta}\zeta(\tau,
\cdot)|_2^2,
\end{eqnarray*}
we can infer that
\begin{align*}
&\frac{1}{2}\frac{d}{d\tau}\|D^{n}_{\eta}\zeta(\tau,\cdot)\|_{\frac{1}{2},\e}^2
+(1-\e K(\Psi)
-c\e^{\frac{5}{2}}|D^n_{\eta}\zeta(\tau,\cdot)|_2)|D^{n+2}_{\eta}\zeta(\tau,\cdot)|_2^2\notag\\[1mm]
\le & K(\Psi)
+K(\Psi)|D^n_{\eta}\zeta(\tau,\cdot)|_2^2
+\e K(\Psi)|D^{n+1}_{\eta}\zeta(\tau,\cdot)|_2^2\\[1mm]
&+c\e |D^n_{\eta}\zeta(\tau,\cdot)|_2^4+c\e^3|D^{n+1}_{\eta}\zeta(\tau,\cdot)|_2^4\notag\\[1mm]
\le & K(\Psi)+K(\Psi)|D^n_{\eta}\zeta(\tau,\cdot)|_2^2
+\e K(\Psi)|D^{n+2}_{\eta}\zeta(\tau,\cdot)|_2^2\notag\\[1mm]
&+c\e |D^n_{\eta}\zeta(\tau,\cdot)|_2^4+c\e^3|D^{n}_{\eta}\zeta(\tau,\cdot)|_2^2 |D^{n+2}_{\eta}\zeta(\tau,\cdot)|_2^2,
\end{align*}
which we can rewrite in the form
\begin{align*}
&\frac{d}{d\tau}\|D^{n}_{\eta}\zeta(\tau,\cdot)\|_{\frac{1}{2},\e}^2
+\left (2-\e K(\Psi)
-c\e^2\|D^n_{\eta}\zeta(\tau,\cdot)\|_{\frac{1}{2},\e}^2\right )|D^{n+2}_{\eta}\zeta(\tau,\cdot)|_2^2\notag\\[1mm]
\le & K(\Psi)
+K(\Psi)\|D^n_{\eta}\zeta(\tau,\cdot)\|_{\frac{1}{2},\e}^2
 +c\e\|D^n_{\eta}\zeta(\tau,\cdot)\|_{\frac{1}{2},\e}^4,
\end{align*}
estimating $2\|D^n_{\eta}\zeta(\tau,\cdot)\|_{\frac{1}{2},\e}\le \|D^n_{\eta}\zeta(\tau,\cdot)\|_{\frac{1}{2},\e}^2+1$ and
recalling that $\e\in (0,1]$.

Up to replacing $(0,1]$ by a smaller interval $(0,\e_0]$, we can assume that
$\varepsilon K(\Psi)<1$ for any $\e\in (0,\e_0]$. Hence, applying Lemma \ref{lem-5}, with
\begin{align*}
&c_0=1,\qquad\;\,
c_1=K(\Psi),\qquad\;\,c_2=K(\Psi),\qquad\;\,
c_3=c,\\[1mm]
&A_{\e}(\tau)= \|D^{n}_{\eta}\zeta(\tau,\cdot)\|_{\frac{1}{2},\e}^2,\qquad\;\,
f_{\varepsilon}(\tau)=|D^{n+2}_{\eta}\zeta(\tau,\cdot)|_2^2,\qquad\;\,
\end{align*}
we conclude the proof.
\end{proof}

Now, taking advantage of the previous {\it a priori estimates}, which can be extended also to
variational solutions $\zeta_N$ to \eqref{eq-final-ve-1} belonging to the space spanned by the functions $w_1,\ldots,w_N$ (with
constants independent of $N\in\mathbb N$), and using the classical Faedo-Galerkin method, the following result can be proved.

\begin{thm}\label{zeta}
Fix $T>0$. Then, there exists $\e_0(T)>0$ such that, for any $0<\e \leq \e_0(T)$, Equation \eqref{eq-final-ve-1}
has a unique classical solution $\zeta$ on $[0,T]$, vanishing at $\tau=0$.
\end{thm}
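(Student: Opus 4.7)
The plan is to implement the classical Faedo-Galerkin method based on the eigenfunctions $\{w_k\}$ of $D_{\eta\eta}$, combined with the uniform a priori estimates of Theorem \ref{cor-apriori-estim}. For $N\in\mathbb N$, let $V_N$ be the linear span of $\{w_1,\dots,w_N\}$ (we drop $w_0$ since $\zeta=\rho_\eta$ has zero mean, and then $\Pi\zeta=0$ is preserved by the equation) and look for a Galerkin approximation
\[
\zeta_N(\tau,\eta)=\sum_{k=1}^{N}\widehat\zeta_N(\tau,k)\,w_k(\eta)
\]
solving the finite-dimensional system obtained by taking the $L^2$-projection of \eqref{eq-final-ve-1} onto $V_N$. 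Since $B_\e$ is invertible on $V_N$ by Proposition \ref{prop-H}(a), this projected system is a genuine ODE in $\mathbb C^N$ with smooth (polynomial) right-hand side, so Picard--Lindel\"of yields a unique local solution with $\zeta_N(0,\cdot)=0$.

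The second step is to globalize these $\zeta_N$ on the full interval $[0,T]$. One inspects the energy computation in the proof of Theorem \ref{cor-apriori-estim}: every integration by parts used there remains valid if one tests with $(-1)^n D_\eta^{2n}\zeta_N$, because $V_N$ is invariant under $D_{\eta\eta}$ and the boundary terms vanish by periodicity. The constants produced there are independent of $N$, so one obtains, for any integer $n$ with $\Psi_0\in H^{n+6}_\sharp$, a bound
\[
\sup_{\tau\in[0,T]}\|D_\eta^n\zeta_N(\tau,\cdot)\|_{\frac12,\e}^2\le K_n,\qquad 0<\e\le\e_0(T),
\]
uniformly in $N$. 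This a fortiori prevents blow-up of the ODE, yielding a global-in-time Galerkin solution on $[0,T]$, and moreover gives uniform $H^s$-bounds for $s$ as large as we wish by choosing $\Psi_0$ smooth enough.

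Next we pass to the limit. The uniform bounds above, together with the equation rewritten as $\zeta_{N,\tau}=B_\e^{-1}\bigl(\mathscr L\zeta_N-H_\e\Psi_\tau+M_\e((\Psi^2)_\eta)+\e F_\e((\zeta_N^2)_\eta)+2F_\e((\Psi\zeta_N)_\eta)\bigr)$, provide a uniform bound on $\zeta_{N,\tau}$ in $L^\infty([0,T];H^{s-3}_\sharp)$ via Proposition \ref{prop-H}. The Aubin--Lions lemma then yields a subsequence with $\zeta_{N_j}\to\zeta$ strongly in $C([0,T];H^{s-1}_\sharp)$ (for any $s$ we have arranged), which is ample to pass to the limit in the quadratic nonlinearities $F_\e((\zeta_N^2)_\eta)$ and $F_\e((\Psi\zeta_N)_\eta)$. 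The limit $\zeta$ satisfies \eqref{eq-final-ve-1} in the weak sense; by choosing $n$ large enough in the bootstrap, $\zeta\in C([0,T];H^{s}_\sharp)\cap C^1([0,T];H^{s-3}_\sharp)$ for any prescribed $s$, so $\zeta$ is in fact a classical solution vanishing at $\tau=0$.

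The main (and only really delicate) point is uniqueness, which I would handle by an energy argument. Given two classical solutions $\zeta_1,\zeta_2$, the difference $w=\zeta_1-\zeta_2$ solves
\[
\frac{\partial}{\partial\tau}\mathscr B_\e w=\mathscr L w+\e\mathscr F_\e\bigl(((\zeta_1+\zeta_2)w)_\eta\bigr)+2\mathscr F_\e((\Psi w)_\eta),\qquad w(0,\cdot)=0.
\]
Testing with $w$ in $L^2$ and using the estimates of the type \eqref{estim-6}--\eqref{LLL} (with $\chi=(\zeta_1+\zeta_2)w$ and $\chi=\Psi w$), together with the already established uniform bounds on $\zeta_1,\zeta_2,\Psi$ in high Sobolev norms, produces a differential inequality of the form
\[
\frac{d}{d\tau}\|w(\tau,\cdot)\|_{\frac12,\e}^2+\bigl(1-\e\,C\bigr)|D_\eta^{2}w(\tau,\cdot)|_2^2\le C(T)\|w(\tau,\cdot)\|_{\frac12,\e}^2.
\]
Taking $\e_0(T)$ smaller if necessary, the dissipative term has a favorable sign and Gronwall's lemma forces $w\equiv 0$. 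The main obstacle in the whole argument is precisely this balance between the quadratic, fourth-order nonlinearity $F_\e$ and the fourth-order dissipation coming from $\mathscr L$: only the smallness of $\e$ (and the smoothness of $\Psi$) allows one to absorb the problematic $|D_\eta^{n+2}\zeta|_2^2$ terms on the left-hand side, exactly as in Lemma \ref{lem-5}.
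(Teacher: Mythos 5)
Your argument matches the paper's own (essentially one-sentence) proof: the authors simply invoke the classical Faedo--Galerkin method based on approximations $\zeta_N$ in the span of $w_1,\dots,w_N$, together with the remark that the a priori estimates of Theorem \ref{cor-apriori-estim} extend to these $\zeta_N$ with constants independent of $N$. Your proposal correctly supplies all the details the paper leaves unwritten (local solvability of the projected ODE, globalization via the uniform bounds, Aubin--Lions compactness for the limit passage, and the Gronwall-type uniqueness estimate, which again hinges on the $\e$-smallness balancing $F_\e$ against the fourth-order dissipation, exactly as in Lemma \ref{lem-5}).
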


\subsection{Proof of Main Theorem}

Since the unique solution $\zeta$ of Problem \eqref{eq-final-ve-1} is the candidate to be the $\eta$-derivative
of the solution $\rho$ to Problem  \eqref{pb-fin-ve}, $\rho$ should split into the sum
$\rho(\tau,\eta) = ({\mathscr P}(\zeta))(\tau,\eta) + \upsilon(\tau)$ for some scalar valued function $\upsilon$,
where
\begin{eqnarray*}
({\mathscr P}(\zeta))(\tau,\eta)
=\int_{-\frac{\ell_0}{2}}^{\eta}\zeta(s)ds-\frac{1}{2}\int_{-\frac{\ell_0}{2}}^{\frac{\ell_0}{2}}\zeta(s)\left (1-\frac{2s}{\ell_0}\right )ds.
\end{eqnarray*}
Imposing that $\rho$ in the previous form is a solution to \eqref{pb-fin-ve} and projecting along $\Pi(L^2)$, we see that
$\rho$ is a solution to \eqref{pb-fin-ve} if and only if $\upsilon$ solves the
following Cauchy problem:
\begin{eqnarray*}
\left\{
\begin{array}{ll}
\displaystyle\frac{d\upsilon}{d\tau}=-\Pi(H_{\e}(\Phi_{\tau}))-\frac{1}{2}\e\Pi(\zeta^2)-\Pi(\Phi_{\eta}\zeta),\\[3mm]
\upsilon(0)=0.
\end{array}
\right.
\end{eqnarray*}
Since this problem has in fact a unique solution, and ${\mathscr P}(\zeta)+\upsilon$ vanishes at $\tau=0$, we conclude that
problem \eqref{pb-fin-ve} is uniquely solvable.

To complete the proof, we should show that  there exists $M>0$ such that
\begin{equation}
\sup_{{\tau\in [0,T]}\atop{\eta\in
[-\ell_0/2,\ell_0/2]}} |\rho(\tau,\eta)| \leq M,
\label{final-apriori}
\end{equation}
uniformly in $0<\e\leq \e_0(T)$. Once this estimate is proved, coming back from
Problem \eqref{pb-fin-ve} to Equation \eqref{intro-3order}, we see that the latter one has a unique classical solution
$\varphi:[0,\frac{T}{\varepsilon^2U^2}]\times\mathbb R\to\mathbb R$,
which is periodic (with respect to the spatial variable) with period $\ell_{\e}=\ell_0/(\sqrt{\varepsilon}U)$, and satisfies
$\varphi(0,\cdot)=\e U^{-1}\Phi_0(\sqrt{\varepsilon}U\cdot)$,
as well as the estimate
\begin{eqnarray*}
\|\varphi(t,\cdot)-\e U^{-1}\Phi(t\e^2 U^2,\cdot\sqrt{\e}U)\|_{C([-\ell_{\e}/2,\ell_{\e}/2])}\le
\frac{\varepsilon^2 M}{U},\qquad\;\,t\in [0,T_{\e}],
\end{eqnarray*}
as it is claimed.

So, let us prove \eqref{final-apriori}.
For this purpose it is enough to use the a priori estimate \eqref{main-estimate} jointly
with the Poincar\'e-Wirtinger inequality, to estimate $\zeta$, and just \eqref{main-estimate} to estimate $\upsilon$.
This completes the proof of the Main Theorem.

\section{Numerical experiments}\label{numerics}

In this section, we intend to solve numerically Equation \eqref{perturbed-4th-order} for small positive $\e$ and illustrate the convergence to the solution of K-S equation.

In order to reformulate \eqref{perturbed-4th-order} on the interval $[0,2\pi]$ with periodic boundary conditions,
we set $x=\eta/(2\tilde{\ell_0})$, where $\tilde{\ell_0}={\ell_0}/{4\pi}$. It comes:
\begin{align*}
&\frac{\partial}{\partial\tau} \Bigg (\sqrt{I-\frac{\e}{\tilde{\ell_0}^2}D_{xx}} \Bigg )\psi
=-\frac{1}{4\tilde{\ell_0}^4}D_{xxxx}\psi-\frac{1}{4\tilde{\ell_0}^2}D_{xx}\psi\\[1mm]
&+\Bigg\{\Bigg (I-\frac{\e}{\tilde{\ell_0}^2} D_{xx}\Bigg )^{\frac{3}{2}}-3\Bigg (I-\frac{\e}{\tilde{\ell_0}^2}
D_{xx}\Bigg )-4(1+\e)\Bigg (\sqrt{I-\frac{\e}{\tilde{\ell_0}^2}D_{xx}}-I\Bigg )
\Bigg\}\frac{(D_{x}\psi)^2}{16\tilde{\ell_0}^2}.
\end{align*}
Next, we define the bifurcation parameter $\beta=4\tilde{\ell_0}^2$ as in \cite{HN}, \cite{BLSX10}. After multiplication by $\beta^2$, it comes:
\begin{align*}
&\frac{\partial}{\partial\tau} \left (\sqrt{\beta^4-4\e \beta^3 D_{xx}} \right)\psi
=-4 D_{xxxx}\psi-\beta D_{xx}\psi\\[1mm]
&+\frac{\beta}{4}\left\{(I-\frac{4\e}{\beta} D_{xx})^{\frac{3}{2}}-3\left (I-\frac{4\e}{\beta}
D_{xx}\right )-4(1+\e)\left (\sqrt{I-\frac{4\e}{\beta}D_{xx}}-I\right )
\right\}(D_{x}\psi)^2.
\end{align*}
Finally, we rescale the time, setting $t={\tau}/{\beta^2}$:
\begin{align*}
&\frac{\partial}{\partial t} \left (\sqrt{I-\frac{4\e}{\beta} D_{xx}} \right)\psi
=-4 D_{xxxx}\psi-\beta D_{xx}\psi\\[1mm]
&+\frac{\beta}{4}\left\{\left (I-\frac{4\e}{\beta} D_{xx}\right )^{\frac{3}{2}}-3\left (I-\frac{4\e}{\beta}
D_{xx}\right )-4(1+\e)\left (\sqrt{I-\frac{4\e}{\beta}D_{xx}}-I\right )
\right\}(D_{x}\psi)^2,
\end{align*}
and setting $\e' = {\e}/{\beta}$, the prime being omitted hereafter, we obtain:
\begin{align}
&\frac{\partial}{\partial t} \left (\sqrt{I-4\e D_{xx}} \right)\psi \nonumber
=-4 D_{xxxx}\psi-\beta D_{xx}\psi \nonumber \\[1mm]
&+\frac{\beta}{4}\left\{(I-4\e D_{xx})^{\frac{3}{2}}-3(I-4\e
D_{xx})-4(1+\e)\left (\sqrt{I-4\e D_{xx}}-I\right )
\right\}(D_{x}\psi)^2. \label{numerical 1}
\end{align}
The initial condition is given by $\psi(0,\cdot)=\psi_0$,
where $\psi_0$ is periodic with period $2\pi$.
Note that, in contrast to \cite{HN}, \cite{BLSX10}, we do not subtract the drift.

Equation \eqref{numerical 1} reads in discrete Fourier variable:
\begin{align*}
&\frac{\partial}{\partial t} \left (\sqrt{1+4\e k^2} \right)\widehat\psi(t,k)
=-4k^4\widehat\psi(t,k)+\beta k^2\widehat\psi(t,k)\\[1mm]
&+\frac{\beta}{4}\left\{(1+4\e k^2)^{\frac{3}{2}}-3(1+4\e
 k^2)-4(1+\e)\left (\sqrt{1+4\e  k^2}-1\right )
\right\}\widehat{(\psi_x)^2}(t,k).
\end{align*}
We use a backward-Euler schema for the first-order time derivative, treat implicitly all the linear terms and explicitly the nonlinear terms. The implicit treatment of the fourth- and second-order terms reduces the stability
constraint, while the explicit treatment of the nonlinear terms
avoids the expensive process of solving
nonlinear equations at each time step.
For simplicity, in the rest of this section we use the notation $\widehat f_k$ instead of $\widehat f(k)$.
It comes:
\begin{align*}
& \left (\sqrt{ 1+4\e k^2} \right)\frac{\widehat\psi_k^{n+1}-\widehat\psi_k^{n}}{\Delta t}
=-4k^4\widehat\psi_k^{n+1}+\beta k^2\widehat\psi_k^{n+1} \\[1mm]
&+\frac{\beta}{4}\left\{(1+4\e k^2)^{\frac{3}{2}}-3(1+4\e
 k^2)-4(1+\e)\left (\sqrt{1+4\e  k^2}-1\right )
\right\}  \{[(\psi_x)^n]^2\}_k,
\end{align*}
where $\{(\psi_x)^2\}_k$ represents the $k$-th Fourier coefficient of $(\psi_x)^2$.
This method is of the first order with respect to time.
From the previous equation it is easy to compute the $k$-th Fourier coefficient $\widehat\psi_k^{n+1}$. One gets:
\begin{align}
&\widehat\psi_k^{n+1}=\Big ((1+4\e k^2)^{\frac{1}{2}}+4k^4{\Delta t} -\beta k^2{\Delta t}  \Big )^{-1}\notag\\
&\qquad\qquad\times\Big ((1+4\e k^2)^{\frac{1}{2}}\widehat\psi_k^{n}\notag\\
&\qquad\qquad\qquad\;\,
+\frac{\beta{\Delta t}}{4}\left\{(1+4\e k^2)^{\frac{3}{2}}-3( 1+4\e k^2)-4(1+\e)[(1+4\e k^2)^{\frac{1}{2}}-1]\right\}\notag\\
&\qquad\qquad\qquad\qquad\quad\times \big\{[(\psi_x)^n]^2\big\}_k\Big ),
\label{numerical 6}
\end{align}
Practical calculations hold in the spectral space. We use an additional FFT to recover the physical nodal values $\psi_j$ from $\widehat\psi_k$, where $j$ stands for the division node in the physical space.

The numerical tests aim at checking the behavior of the solutions of Equation \eqref{numerical 1} for values of $\varepsilon$ close to $0$, and compare them to the Kuramoto-Sivashinsky equation.
In Figures \ref{fig-eps}-\ref{fig-eps4}, and \ref{fig-eps5}-\ref{fig-eps8}, we plot consecutive front positions computed using Equation \eqref{numerical 6} taking $\beta=10,\,20$ and giving to $\varepsilon$ the following values: $
0.1$, $0.01$, $0.001$, and $0$ (which corresponds to the K-S equation).
\begin{figure}[thbp]
 \centering
\begin{tabular}{c}
\includegraphics[width=9cm,height=3cm,angle=0]{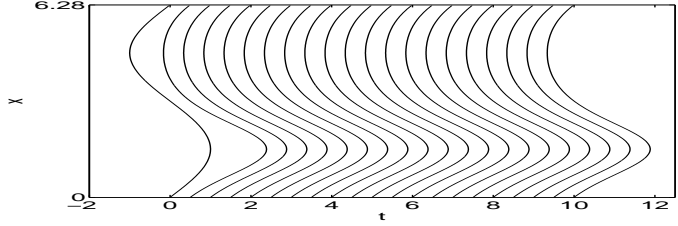}
\end{tabular}
\caption{ Front propagation with $\beta=10$, $\varepsilon=0.1$ and $\psi_0(x)=\sin(x)$.}
\label{fig-eps}
\end{figure}

\begin{figure}[thbp]
 \centering
   \begin{tabular}{c}
\includegraphics[width=9cm,height=3cm,angle=0]{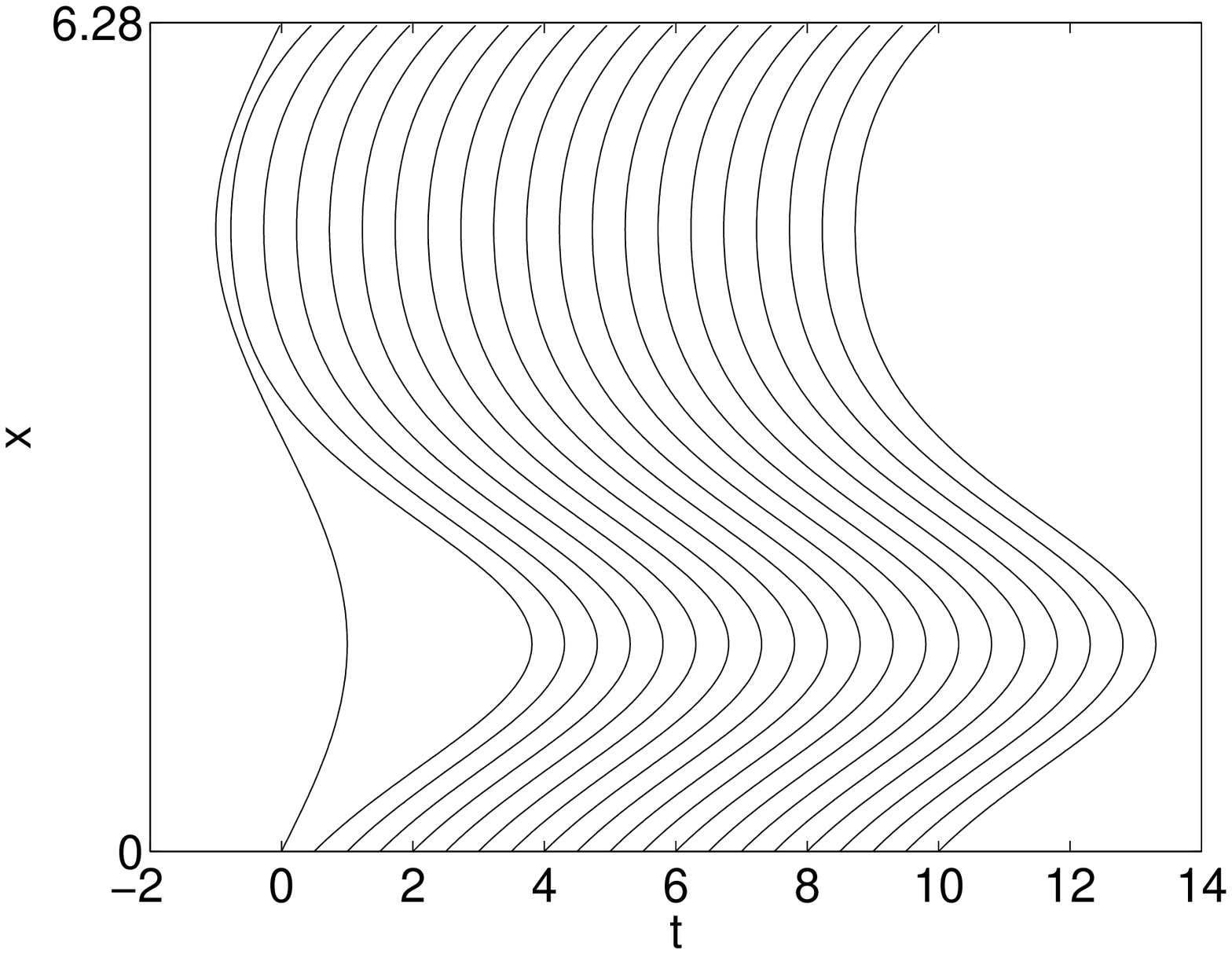}
\end{tabular}
\caption{Front propagation with $\beta=10$, $\varepsilon=0.01$ and $\psi_0(x)=\sin(x)$.}
\label{fig-eps2}
\end{figure}

\begin{figure}[thbp]
 \centering
   \begin{tabular}{c}
\includegraphics[width=9cm,height=3cm,angle=0]{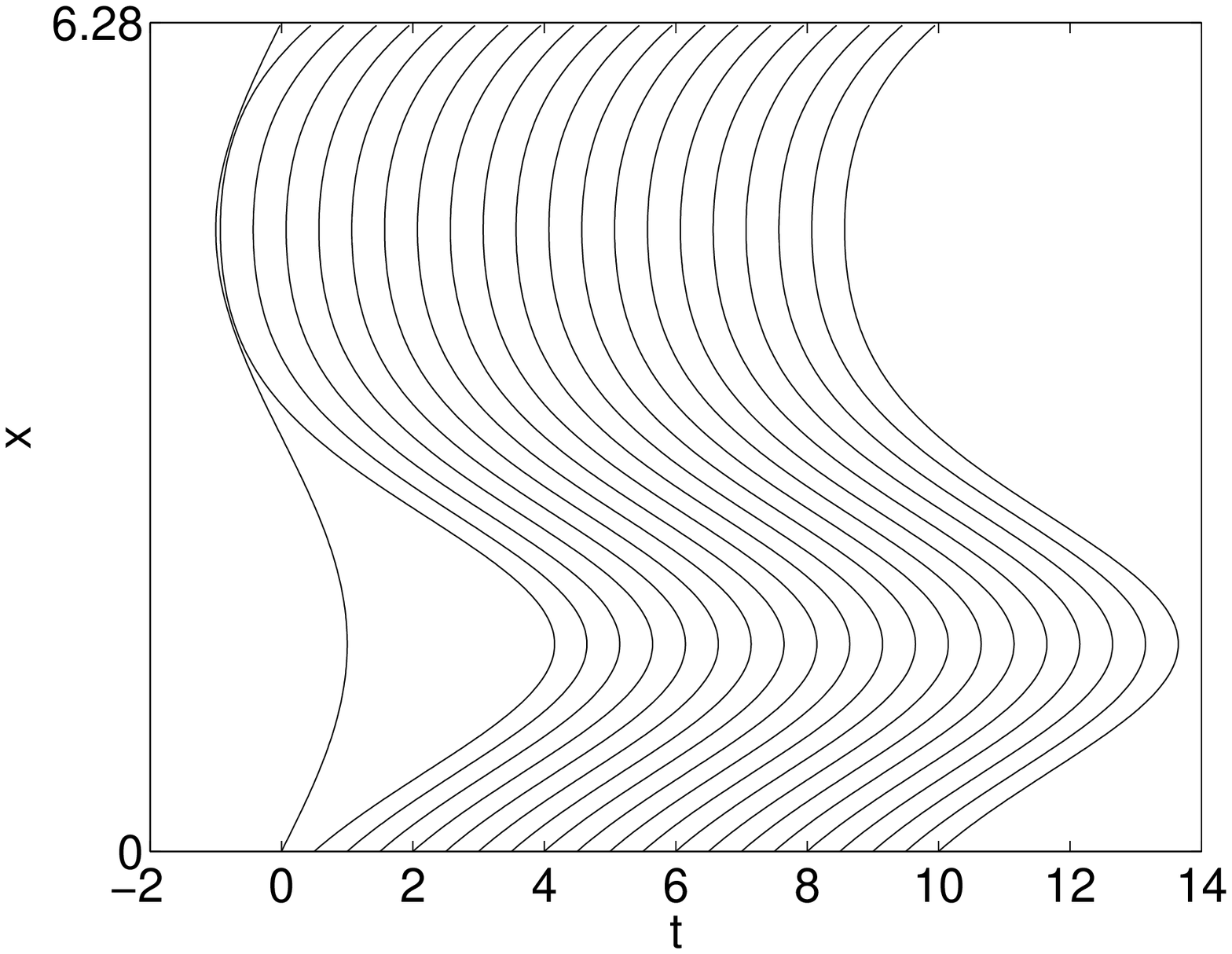}
\end{tabular}
\caption{Front propagation with $\beta=10$, $\varepsilon=0.001$ and $\psi_0(x)=\sin(x)$.}
\label{fig-eps3}
\end{figure}

\begin{figure}[thbp]
 \centering
   \begin{tabular}{c}
\includegraphics[width=9cm,height=3cm,angle=0]{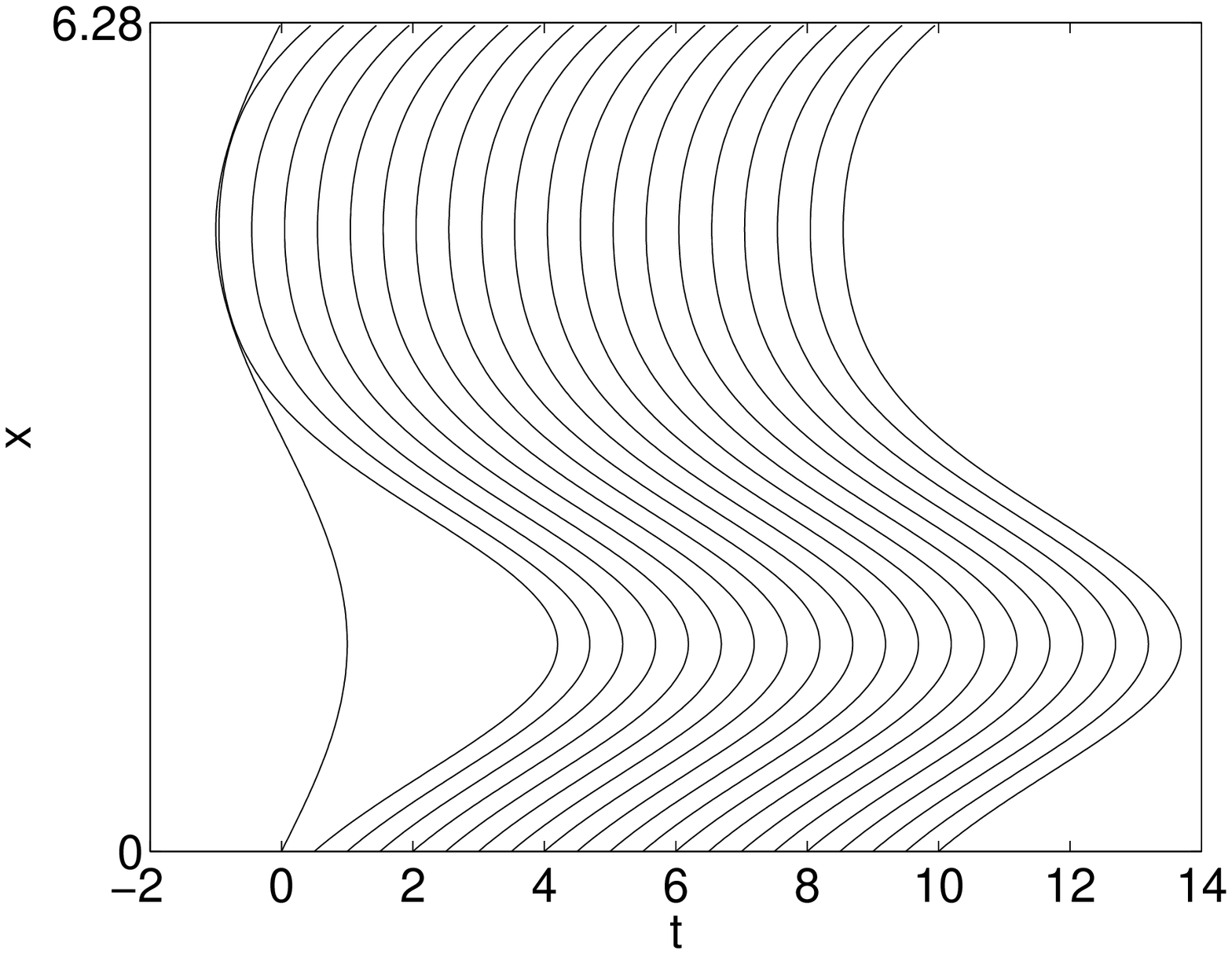}
\end{tabular}
\caption{Front propagation with $ \beta=10$, $\varepsilon=0$ and $\psi_0(x)=\sin(x)$.}
\label{fig-eps4}
\end{figure}

\begin{figure}[thbp]
 \centering
\begin{tabular}{c}
\includegraphics[width=9cm,height=3cm,angle=0]{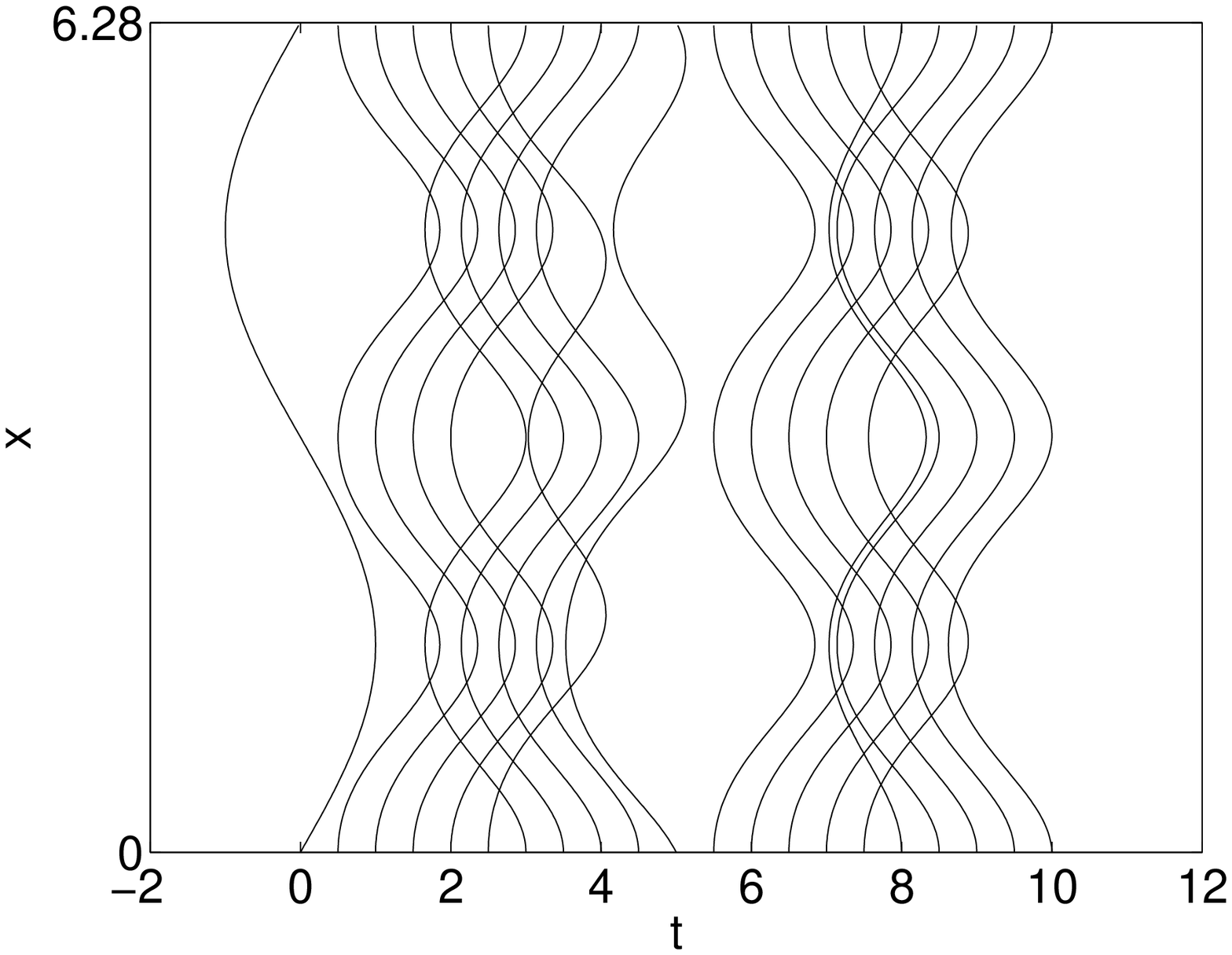}
\end{tabular}
\caption{ Front propagation with $\beta=20$, $\varepsilon=0.1$ and $\psi_0(x)=\sin(x)$.}
\label{fig-eps5}
\end{figure}

\begin{figure}[thbp]
 \centering
   \begin{tabular}{c}
\includegraphics[width=9cm,height=3cm,angle=0]{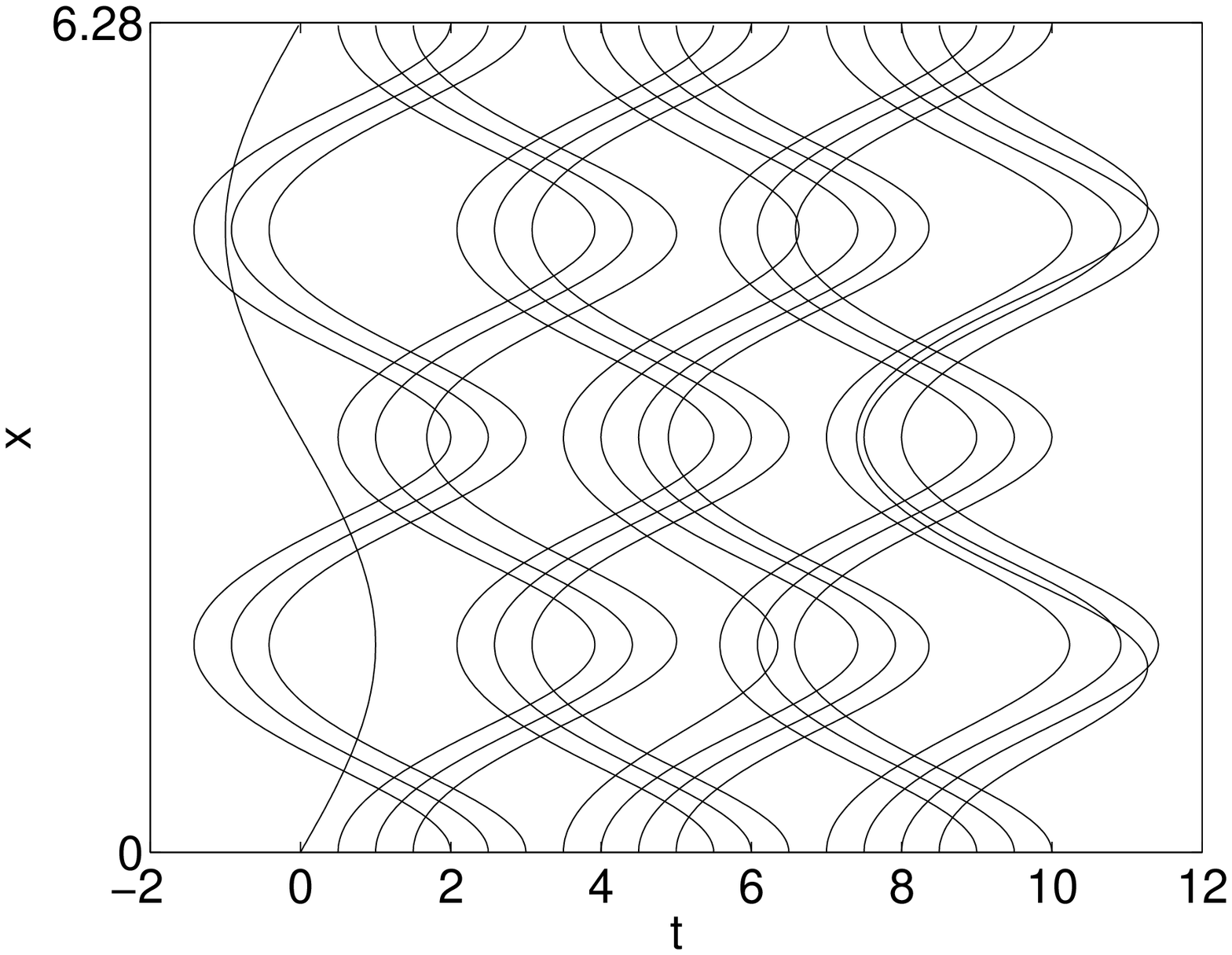}
\end{tabular}
\caption{ Front propagation with $\beta=20$, $\varepsilon=0.01$ and $\psi_0(x)=\sin(x)$.}
\label{fig-eps6}
\end{figure}

\newpage

\begin{figure}[thbp]
 \centering
   \begin{tabular}{c}
\includegraphics[width=9cm,height=3cm,angle=0]{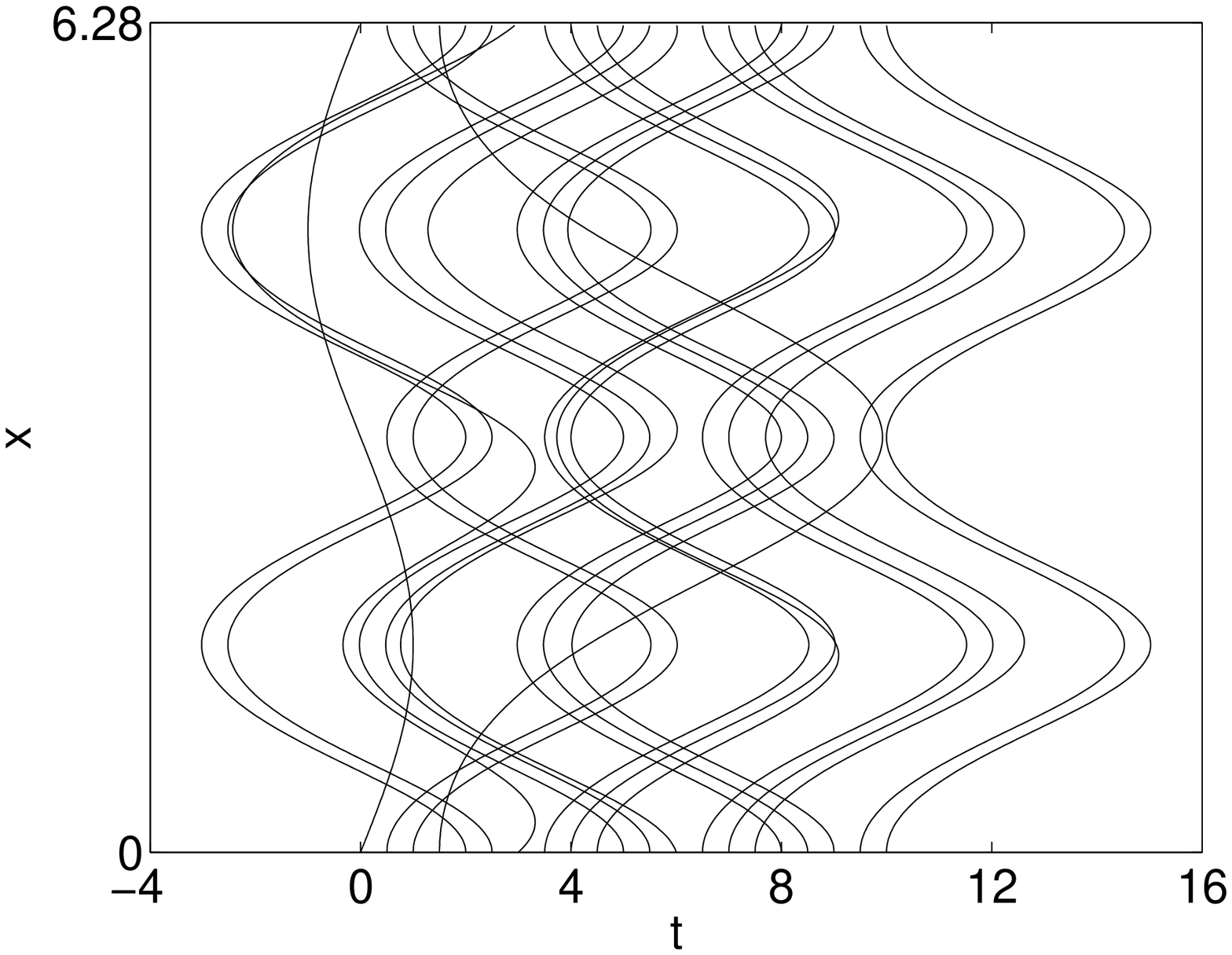}
\end{tabular}
\caption{ Front propagation with $\beta=20$, $\varepsilon=0.001$ and $\psi_0(x)=\sin(x)$.}
\label{fig-eps7}
\end{figure}

\begin{figure}[thbp]
 \centering
   \begin{tabular}{c}
\includegraphics[width=9cm,height=3.2cm,angle=0]{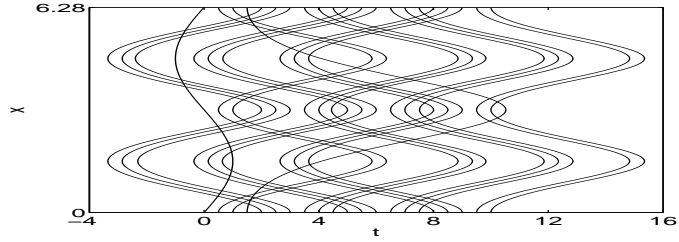}\\[-5mm]
\end{tabular}
\caption{Front propagation with $\beta=20$, $\varepsilon=0$ and $\psi_0(x)=\sin(x)$.}
\label{fig-eps8}
\end{figure}

\newpage

We now investigate the dynamics of Equation \eqref{numerical 1} with respect to the parameter $\beta$. For this purpose,
we fix $\varepsilon=0.001$.

The numerical simulations confirm that, as for the K-S equation, $0$ turns out to be a global attractor
for the solution to Equation \eqref{numerical 1}, for any $\beta\in [1,4]$.
A non-trivial attractor is expected
for larger $\beta$'s.
In Figures \ref{fig-beta1a}-\ref{fig-beta2b}, we can see the front evolutions generated by \eqref{numerical 6} with $\beta=30,\,60$ for two different
initial conditions.
In all the below figures, the periodic orbit is clearly observed.
\begin{figure}[thbp]
 \centering
   \begin{tabular}{c}
\includegraphics[width=9cm,height=3cm,angle=0]{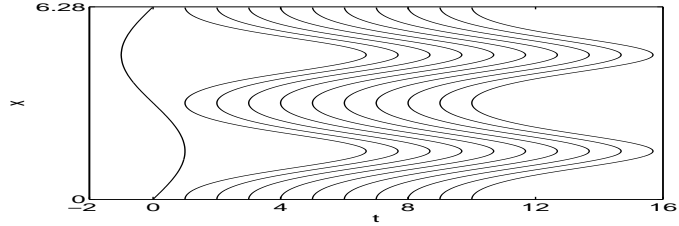}
\end{tabular}
\caption{Front propagation with $\beta=30$, $\varepsilon=0.001$ and $\psi_0(x)=\sin(x)$.}
\label{fig-beta1a}
\end{figure}

\begin{figure}[thbp]
 \centering
   \begin{tabular}{c}
\includegraphics[width=9cm,height=3cm,angle=0]{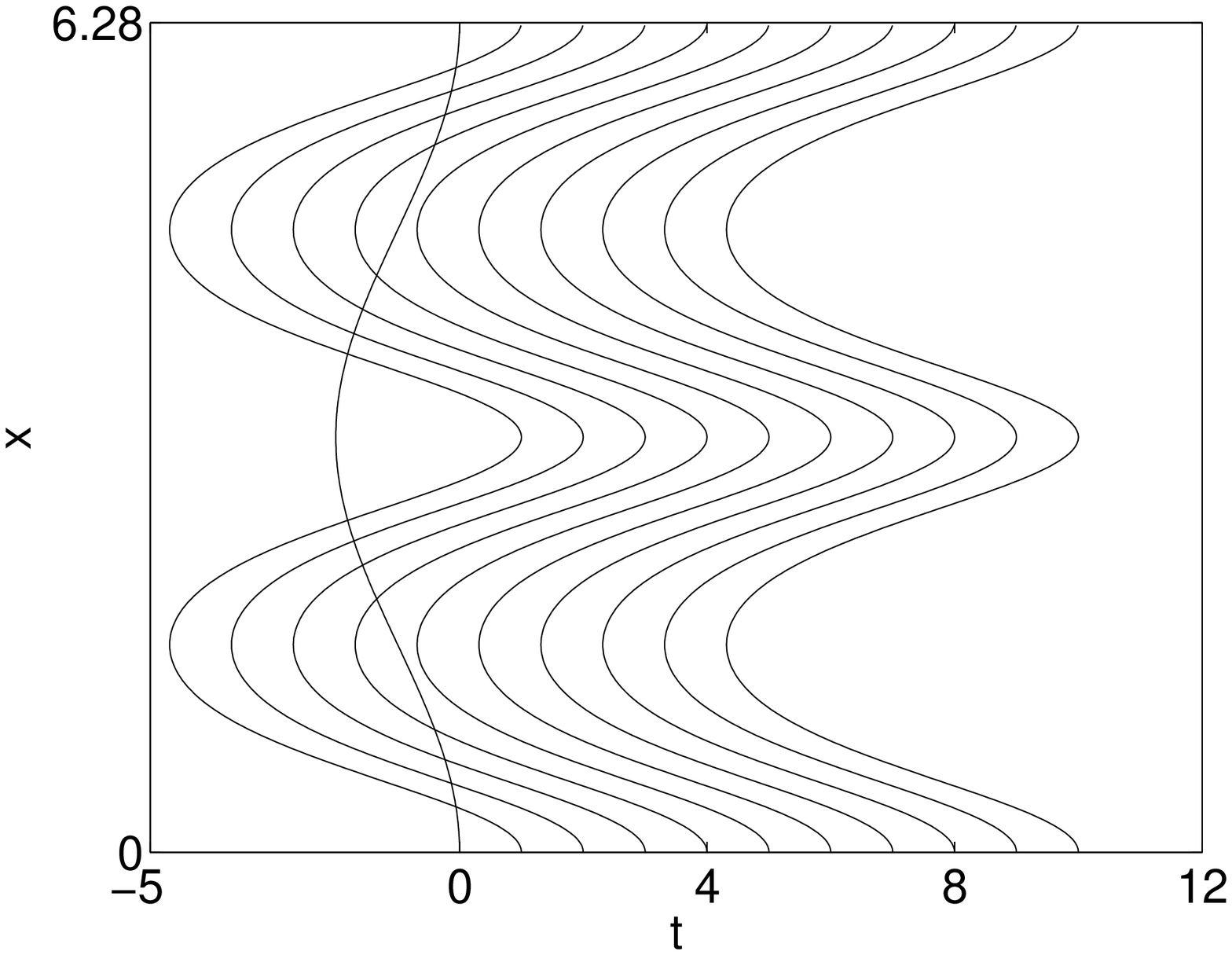}
\end{tabular}
\caption{Front propagation with $ \beta=30$, $\varepsilon=0.001$ and $\psi_0(x)=\cos(x)$.}
\label{fig-beta1b}
\end{figure}

\begin{figure}[thbp]
 \centering
   \begin{tabular}{c}
\includegraphics[width=9cm,height=3cm,angle=0]{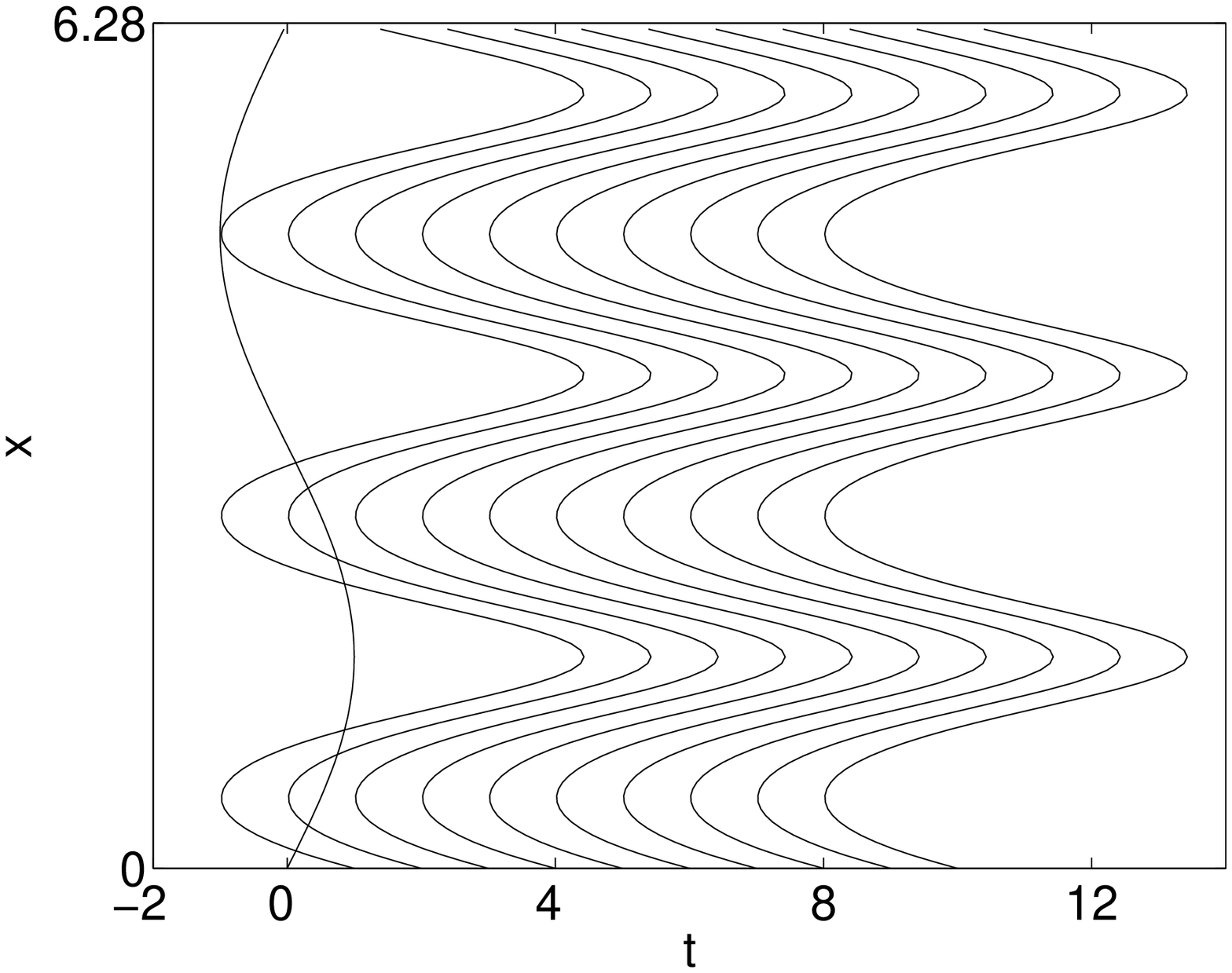}
\end{tabular}
\caption{Front propagation with $ \beta=60$, $\varepsilon=0.001$ and $\psi_0(x)=\sin(x)$.}
\label{fig-beta2a}
\end{figure}

\begin{figure}[thbp]
 \centering
   \begin{tabular}{c}
\includegraphics[width=9cm,height=3cm,angle=0]{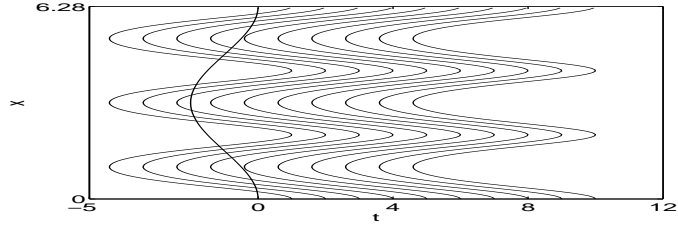}
\end{tabular}
\caption{Front propagation with $ \beta=60, \varepsilon=0.001$, and $\psi_0(x)=\cos(x)$.}
\label{fig-beta2b}
\end{figure}

\newpage

Summing up, our numerical tests confirm that  Equation \eqref{numerical 1} preserves the same structure as K-S equation. Larger $\beta$ generates an even richer dynamics, see  Figure \ref{fig-beta3} where the front propagation is captured from a computation with $\beta=108$. As predicted in \cite{HN}, the front evolves toward an essentially quadrimodal global attractor.
\begin{figure}[thbp]
 \centering
   \begin{tabular}{c}
\includegraphics[width=9cm,height=3cm,angle=0]{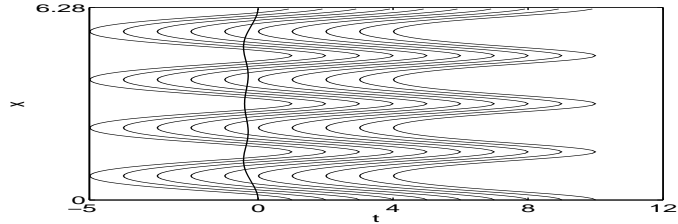}
\end{tabular}
\caption{Front propagation with $ \beta=108, \varepsilon=0.0001$, and $\psi_0(x)=0.1(\cos(x)+\cos(2x)+\cos(3x))$.}
\label{fig-beta3}
\end{figure}

\end{document}